\newcommand{\executeiffilenewer}[3]{%
\ifnum\pdfstrcmp{\pdffilemoddate{#1}}%
{\pdffilemoddate{#2}}>0%
{\immediate\write18{#3}}\fi%
}
\newcommand{%
\executeiffilenewer{.svg}{.pdf}%
{inkscape -z -D --file=.svg %
--export-pdf=.pdf --export-latex}%
\input{.pdf_tex}%
}[1]{%
\executeiffilenewer{#1.svg}{#1.pdf}%
{inkscape -z -D --file=#1.svg %
--export-pdf=#1.pdf --export-latex}%
\input{#1.pdf_tex}%
}
\def\frac#1#2{{#1\over#2}}
\newcommand{\ZZ}{\mathbb{Z}} 
\newcommand{\QQ}{\mathbb{Q}}
\newcommand{\RR}{\mathbb{R}}
\def\vol{\mathrm{Vol}}
\def\OO{\mathcal{O}} 
\def\SS{\mathcal{S}}
\def\Vol{\mathrm{Vol}}
\def\TT{\mathcal{T}} 
\def\QQ{\mathcal{Q}}
\def\TT{\mathcal{T}}
\def\QQ{\mathcal{Q}}
\def\split{\setminus \!\! \setminus}
\newtheorem{proposition}{Proposition}[section]
\newtheorem{theorem}[proposition]{Theorem}
\newtheorem{definition}[proposition]{Definition}
\newtheorem{corollary}[proposition]{Corollary}
\newtheorem{lemma}[proposition]{Lemma}
\newtheorem{characteristic}[proposition]{Characteristic Suborbifold}
\theoremstyle{remark}
\theoremstyle{remark}
\newtheorem*{acknowledgments}{Acknowledgments}
\numberwithin{equation}{section}
\def\Vol{\mbox{\rm{Vol}}}
\begin{document}
\Large
\title[Guts and volume for hyperbolic $3$--orbifolds]{Guts and volume for hyperbolic $3$--orbifolds with underlying space
$S^3$}
\author[]{Christopher~K.~Atkinson}
\address{Department of Mathematics, University of Minnesota Morris, Morris, MN
56267, USA}
\email{catkinso@morris.umn.edu}
\author[]{Jessica Mallepalle}
\address{Department of Mathematics, Arcadia University, Glenside, PA
19038, USA}
\email{jmallepalle@arcadia.edu}
\author[]{Joseph Melby}
\address{Department of Mathematics, University of Minnesota Morris, Morris, MN
56267, USA}
\email{melby131@morris.umn.edu}
\author[]{Shawn~Rafalski}
\address{Department of Mathematics, Fairfield University, Fairfield, CT 06824, USA}
\email{srafalski@fairfield.edu}
\author[]{Jennifer Vaccaro}
\address{Franklin W. Olin College of Engineering, Needham, MA 
02492, USA}
\email{jennifer.vaccaro@students.olin.edu}
\keywords{Hyperbolic 3--dimensional orbifold, hyperbolic orbifold, 2--dimensional suborbifold, hyperbolic volume, Haken orbifold, incompressible 2--orbifold, pared acylindrical orbifold, guts, essential annuli, orbifold annuli, rational tangle}
\date{\noindent March 2017. 
\\ \indent \emph{Mathematics Subject Classification} (2010): 57R18, 57M50, 57N16}
\thanks{This work is supported by a grant from the National Science Foundation DMS-1358454.}

\begin{abstract}
For a hyperbolic $3$--orbifold with underlying space the $3$--sphere,
we obtain a lower bound on its volume in the case that it contains an
essential $2$--suborbifold with underlying space the $2$--sphere with four
cone points. Our techniques involve computing the guts of the orbifold split
along the $2$--suborbifold via a careful analysis of its topology. We also
characterize the orbifolds of this type that have empty guts.
\end{abstract}
\maketitle

\section{Introduction}\label{S:Intro} 
This paper contributes to understanding the
organization of the volumes of hyperbolic $3$-manifolds and $3$-orbifolds.
One common theme in this organization is the classification of such spaces
for which the presence of a particular type of 2-dimensional sub-object
informs the volume of the ambient space. In the case we are considering, an
embedded incompressible 2-suborbifold in a 3-orbifold (one measure of the
3-orbifold's topological complexity) either informs us about the volume of
this 3-orbifold or about its topological structure. Examples of this
volume/complexity dynamic occur in recent years in the work of Gabai,
Meyerhoff and Milley in the identification of the Weeks--Fomenko--Matveev
manifold as the lowest volume hyperbolic $3$-manifold \cite{GabMeyMill1,
GabMeyMill2}. Similarly, Gehring, Marshall, and Martin have identified the
lowest volume hyperbolic $3$-orbifolds \cite{GehringMartin, MarshallMartin}
based, in part, on this idea. Miyamoto gives a lower volume bound for
hyperbolic 3-manifolds \cite{miyamoto} (generalized to 3-orbifolds in
\cite{rafalski-thesis}) with totally geodesic boundary. Recent work by a
subset of the authors identifies the lowest volume polyhedral hyperbolic
3-orbifolds that contain an arbitrary essential 2-suborbifold
\cite{SmallestHakenPoly}.  

In the current paper, we employ a result of Agol,
Storm, and Thurston \cite{agol-storm-thurston} to find lower volume bounds
(or else, a topological characterization) for a large class of hyperbolic
3-orbifolds that contain a particular type of essential 2-suborbifold (i.e. \emph{Haken} 
3-orbifolds) in
terms of the topology of that suborbifold.

Let $\Vol(\cdot)$ denote hyperbolic volume, $\chi(\cdot)$ Euler
characteristic, and $V_{8} \approx 3.66$ the volume of the regular, ideal,
hyperbolic octahedron. Let $S^2(n_1,n_2,n_3,n_4)$ denote the orientable $2$--orbifold
with base space $S^2$ and cone points of orders $n_1$, $n_2$, $n_3$, and
$n_4$.  We prove the following theorem:

\begin{theorem}\label{T:VolumeGuts} Let $\OO$ be a compact, orientable,
irreducible, turnover-reduced, $3$-orbifold with underlying space $S^{3}$ and
singular set $\Sigma$ whose interior admits a hyperbolic structure of finite
volume. Suppose $\OO$ contains an incompressible $2$-suborbifold $\SS$ 
of type $S^2(n_1,n_2,n_3,n_4)$.
Then one of the following lower bounds for $\Vol(\OO)$ holds:
	\begin{enumerate}
		\item $\Vol(\OO) \geq -V_{8}\chi(\SS) = V_{8} \left(2 -			
			\sum_{i=1}^{4} 1/n_{i}  \right)$, or \item $\Vol(\OO) \geq
			  \frac{1}{2}V_{8} \left(-\chi(\SS) + 1- 1/n_{i_{1}} -
			  1/n_{i_{2}}  \right)$, 
			\\ 
			(where $\{n_{i_{1}}, n_{i_{2}} \} \subset \{n_{1}, n_{2}, n_{3},
		  n_{4}\}$),  or \item $\Vol(\OO) \geq -\frac{1}{2}V_{8}\chi(\SS)$,
			or 
		\item $\Vol(\OO) \geq \frac{1}{2}V_{8} \left( 2- 1/n_{i_{1}} -
			  1/n_{i_{2}} - 1/n_{i_{3}} - 1/n_{i_{4}} \right)$, 
			\\
			(where $\{n_{i_{1}}, n_{i_{2}},n_{i_{3}}, n_{i_{4}} \} \subset
		  \{n_{1}, n_{2}, n_{3}, n_{4}\}$),  or \item $\Vol(\OO) \geq
			\frac{1}{2}V_{8} \left(1- 1/n_{i_{1}} - 1/n_{i_{2}}  \right)$, 
			\\ 
			(where $\{n_{i_{1}}, n_{i_{2}} \} \subset \{n_{1}, n_{2}, n_{3},
		  n_{4}\}$),  or \item $\OO$ has one of the forms given in Section
			\ref{s:emptyguts}.
	\end{enumerate} 
\end{theorem}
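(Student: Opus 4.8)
The plan is to cut $\OO$ open along $\SS$, identify the guts of the resulting orbifold, and apply the orbifold version of the Agol--Storm--Thurston volume inequality \cite{agol-storm-thurston}. Since $\SS$ has underlying space $S^2$ and $\OO$ has underlying space $S^3$, the suborbifold $\SS$ separates $\OO$ into two pieces $\OO_1$ and $\OO_2$, each with underlying space a $3$-ball and each carrying a copy of $\SS$ on its boundary. Each $\OO_i$ is thus a \emph{tangle orbifold}: a ball whose singular set is a collection of arcs meeting the boundary sphere exactly in the four cone points of $\SS$. Because $\OO$ is irreducible and turnover-reduced and $\SS$ is incompressible, the guts $\Gamma = \mathrm{guts}(\OO \split \SS)$ is a compact orientable $3$-orbifold admitting a hyperbolic structure with totally geodesic boundary, and the inequality reads
\[
  \Vol(\OO) \;\geq\; -V_8\,\chi(\Gamma) \;=\; -\frac{1}{2}\,V_8\,\chi(\partial_{\mathrm{geo}}\Gamma),
\]
using $\chi(\Gamma)=\frac{1}{2}\chi(\partial\Gamma)$ for a compact $3$-orbifold together with the fact that the non-geodesic (annular) part of $\partial\Gamma$ has Euler characteristic zero. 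Everything therefore comes down to determining the homeomorphism type of $\partial_{\mathrm{geo}}\Gamma$.

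The second step is to compute the characteristic suborbifold of each piece $\OO_i$, rel its boundary copy of $\SS$; the guts is the complement of the essential $I$-bundle and Seifert-fibered pieces. The walls to analyze are the essential Euclidean $2$-suborbifolds with boundary on $\SS$, namely genuine annuli (two parallel essential curves in $\SS$) and orbifold annuli $D^2(2,2)$ (one boundary curve on $\SS$, two order-two cone points on singular arcs). An essential simple closed curve in $S^2(n_1,n_2,n_3,n_4)$ separates the four cone points into two pairs, so cutting $\SS$ along it yields two disks with two cone points, $D^2(n_{i_1},n_{i_2})$ and $D^2(n_{i_3},n_{i_4})$. The structural claim I would establish is that each piece $\OO_i$ contributes to $\partial_{\mathrm{geo}}\Gamma$ exactly one of three things: (i) a full copy of $\SS$, when $\OO_i$ is acylindrical; (ii) a single disk $D^2(n_{i_1},n_{i_2})$, when an essential annulus splits off an $I$-bundle on one side and leaves a single disk as geodesic boundary; or (iii) nothing, when $\OO_i$ is a rational tangle and is therefore swallowed entirely by an $I$-bundle or Seifert piece and has empty guts.

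Granting this trichotomy, the six conclusions follow by bookkeeping. Writing
\[
  \chi\bigl(S^2(n_1,n_2,n_3,n_4)\bigr) = -2 + \sum_{i=1}^{4}\frac{1}{n_i},
  \qquad
  \chi\bigl(D^2(p,q)\bigr) = -1 + \frac{1}{p} + \frac{1}{q},
\]
the geodesic boundary $\partial_{\mathrm{geo}}\Gamma$ is the disjoint union of the contributions of $\OO_1$ and $\OO_2$, so it is one of the six unordered pairs drawn from $\{\SS,\,D^2(\cdot,\cdot),\,\emptyset\}$: the pair $\SS\sqcup\SS$ gives (1); $\SS\sqcup D^2$ gives (2); $\SS$ alone gives (3); $D^2\sqcup D^2$ gives (4); $D^2$ alone gives (5); and the empty boundary, i.e.\ empty guts, gives (6), the orbifolds catalogued in Section~\ref{s:emptyguts}. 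Substituting these Euler characteristics into the displayed volume inequality reproduces each listed bound; for instance $\partial_{\mathrm{geo}}\Gamma = \SS \sqcup D^2(n_{i_1},n_{i_2})$ gives $\Vol(\OO)\geq \frac{1}{2}V_8\bigl(-\chi(\SS)+1-1/n_{i_1}-1/n_{i_2}\bigr)$, which is exactly (2).

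The main obstacle is the topological classification underlying the trichotomy of the second step: I must show that the essential annuli and orbifold annuli in a tangle orbifold are exactly as described, that a piece has empty guts precisely when it is a rational tangle (together with the finitely many exceptional Seifert configurations), and that no other arrangement of the characteristic suborbifold can arise. This is where the hypotheses that $\OO$ is irreducible and turnover-reduced do the real work, excluding compressible annuli and essential turnovers that would otherwise enlarge the characteristic suborbifold; and it is here that a hands-on analysis of rational tangles in the ball, keeping track of the singular arcs relative to the four boundary cone points, is needed to certify that the list of contributions is complete. Once this classification is in place, the volume estimates are immediate.
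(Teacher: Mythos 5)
Your strategy coincides with the paper's: split $\OO$ along $\SS$ into two ball pieces, determine the guts of each by locating the essential (orbifold) annuli and the $I$-bundle pieces of the characteristic suborbifold, and apply the Agol--Storm--Thurston inequality; your Euler-characteristic bookkeeping in the final step is also correct. The problem is that the entire technical content of the theorem lies in the ``structural claim'' you defer, and you never prove it --- you say explicitly that ``a hands-on analysis \dots is needed to certify that the list of contributions is complete.'' In the paper this is exactly Lemma~\ref{l:annuli} (in a compact, orientable, irreducible, atoroidal, turnover-reduced ball orbifold with four boundary cone points, there is at most one nonsingular essential annulus, or at most two essential $D^2(2,2)$ orbifold annuli, configured as in Figure~\ref{F:essann1}) together with Proposition~\ref{L:eyebundles} (the classification of the orbifold $I$-bundles that can arise as complementary regions). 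Proving these requires the linking argument of Figure~\ref{F:only1annulus}, the essential-torus argument of Figure~\ref{F:esstorus1}, and a case analysis of involutions of $2$-orbifolds; none of this appears in your proposal, so what you have is a correct plan rather than a proof.

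Moreover, one plank of your trichotomy is wrong as stated: you claim a piece has empty guts ``precisely when it is a rational tangle.'' Rational tangle pieces cannot occur at all under the hypotheses --- by Proposition~\ref{P:incomp}, $\SS$ is incompressible if and only if no component of $\OO \split \SS$ is an orbifold rational tangle --- so incompressibility already excludes them. The empty-guts pieces are instead the orbifold $I$-bundles of Proposition~\ref{L:eyebundles} (and the configurations of Figure~\ref{F:essann1} in which $R$, $R_1$, $R_2$ are such $I$-bundles): the ``hungry orbifolds'' of Section~\ref{s:emptyguts}. Pursuing the classification with ``rational tangle'' as your empty-guts criterion would fail. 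Two smaller points: your trichotomy omits the case in which a single piece contributes two disk-type guts components (in Figure~\ref{F:essann1}$(ii)$ both $R_1$ and $R_2$ can fail to be $I$-bundles), though the resulting bound happens to coincide with one already on your list; and you never address the geodesic turnover boundary components of $\OO$ arising from Euclidean or hyperbolic vertices of $\Sigma$, which the paper handles by doubling along them before invoking the finite-volume hypothesis of Theorem~\ref{T:AST}.
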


Note that we mean something more general than the standard use of hyperbolic
structure of finite volume. We allow not only for cusps with Euclidean
cross--sections, but for totally geodesic boundary components. See the end of
Section~\ref{s:orbifolds} for details.

Let $D^2_*(n_1,n_2)$ denote the nonorientable $2$-orbifold with base
space $D^2$ with mirrored boundary and with interior containing cone points of
orders $n_1$ and $n_2$. Using the fact that an incompressible $2$-suborbifold in $\OO$ of 
this form has a regular neighborhood whose boundary is an orientable 
and incompressible $S^2(n_1,n_1,n_2,n_2)$, the next result follows 
immediately from the above theorem:

\begin{corollary}\label{C:VolumeGuts}
Under the same conditions on $\OO$ as above, suppose $\OO$ contains an incompressible $2$-suborbifold $\SS$ of type $D^2_*(n_1,n_2)$. Then one of the following lower bounds for $\Vol(\OO)$ holds:
\begin{enumerate}
	\item $\Vol(\OO) \geq -V_{8}\chi(\SS) = V_{8} \left(1 - 1/n_{1} -
			1/n_{2}  \right)$, or \item $\Vol(\OO) \geq \frac{1}{2}V_{8}
			  \left(1- 1/n_{i_{1}} - 1/n_{i_{2}}  \right)$, 
			\\ 
			(where $\{n_{i_{1}}, n_{i_{2}} \} \subset \{n_{1}, n_{2}\}$),  or
	\item $\OO$ has one of the forms given in Section \ref{s:emptyguts}.
\end{enumerate}

\end{corollary}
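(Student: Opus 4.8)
The plan is to deduce the Corollary directly from Theorem~\ref{T:VolumeGuts} by replacing the nonorientable suborbifold $\SS = D^2_*(n_1,n_2)$ with the orientable suborbifold obtained as the boundary of its regular neighborhood. By the fact quoted immediately before the Corollary, this boundary is an orientable, incompressible copy of $\SS' = S^2(n_1,n_1,n_2,n_2)$, which is precisely the type of $2$-suborbifold to which the Theorem applies. So the whole strategy is to feed $\SS'$ into Theorem~\ref{T:VolumeGuts} and then rewrite each of its six conclusions in terms of $\SS$. The only topological input is the quoted regular-neighborhood statement (the orientation double cover of a disk with mirror boundary and two interior cone points is the sphere with four cone points $S^2(n_1,n_1,n_2,n_2)$, with incompressibility inherited), so the remaining work is entirely computational.

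The engine of the reduction is the relation $\chi(\SS') = 2\chi(\SS)$, equivalently $-\chi(\SS') = 2(1 - 1/n_1 - 1/n_2)$, which holds because passing to the orientation double cover doubles the orbifold Euler characteristic. Writing out the six bounds of Theorem~\ref{T:VolumeGuts} with the four cone orders specialized to $n_1,n_1,n_2,n_2$, I expect the cases to collapse as follows. Theorem cases (1), (3), and (4) should each reduce to a bound of the form $\Vol(\OO) \geq V_{8}(1 - 1/n_1 - 1/n_2) = -V_{8}\chi(\SS)$ (case (3) and (4) matching it exactly, case (1) being twice as strong), hence to Corollary case~(1). Theorem case (2) should also reduce to Corollary case~(1), after invoking only that every cone order satisfies $n_i \geq 2$, i.e.\ $1/n_i \leq 1/2$. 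Theorem case (5) should yield Corollary case~(2) essentially verbatim, with $\{n_{i_1},n_{i_2}\}$ ranging over the pairs $\{n_1,n_1\}$, $\{n_1,n_2\}$, $\{n_2,n_2\}$ drawn from the four orders of $\SS'$. Finally, Theorem case (6) gives Corollary case~(3) directly.

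I expect the only delicate point, and thus the main obstacle, to be the bookkeeping in Theorem case (2): there the bound is $\frac{1}{2}V_{8}(-\chi(\SS') + 1 - 1/m_{i_1} - 1/m_{i_2})$, and the chosen orders $m_{i_1},m_{i_2}$ may be an unequal pair such as $\{n_1,n_1\}$, producing an asymmetric expression like $\frac{1}{2}V_{8}(3 - 4/n_1 - 2/n_2)$. The task is to verify that each such expression is at least $V_{8}(1 - 1/n_1 - 1/n_2)$, and I anticipate this reduces exactly to the elementary inequality $n_i \geq 2$ (with equality when the repeated order equals $2$). A secondary, purely notational, care point is reading the index-subset notation in Corollary case~(2) so that it permits a repeated value, arising from the two equal copies of $n_1$ (or of $n_2$) in $\SS'$. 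With these checks in place, the six-way dichotomy of the Theorem collapses to the three-way dichotomy of the Corollary, completing the argument.
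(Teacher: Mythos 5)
Your proposal is correct and is exactly the paper's argument: the authors also deduce the Corollary by applying Theorem~\ref{T:VolumeGuts} to the orientable, incompressible boundary $S^2(n_1,n_1,n_2,n_2)$ of a regular neighborhood of $\SS$, stating that the result then ``follows immediately.'' Your case-by-case arithmetic (using $-\chi(\SS') = 2(1 - 1/n_1 - 1/n_2)$, the inequality $n_i \geq 2$ for Theorem case (2), and the multiset reading of the index notation in Corollary case (2)) correctly fills in the details the paper leaves to the reader.
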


Note that the smallest possible lower bounds given by
Theorem~\ref{T:VolumeGuts} and Corollary~\ref{C:VolumeGuts} is $\frac{1}{12}
v_8 \approx 0.305$.
This volume bound occurs in number (5) of Theorem~\ref{T:VolumeGuts} and in
number (2) of Corollary~\ref{C:VolumeGuts} when $n_{i_1} = 2$ and $n_{i_2} =
3$. As a consequence, one can conclude that under the hypotheses, if
$\vol{\OO} < \frac{1}{12} v_8$, then $\OO$ has empty guts and is of the form
described in Proposition~\ref{P:hungry}.

\subsection{Organization} In Section~\ref{s:background}, we give the relevant
background on orbifolds and describe the conventions that we will use
throughout the paper. We also describe the characteristic suborbifold theory
and relevant ideas. In Section~\ref{s:proof} we prove a lemma that describes
how essential annuli can arise in certain $3$--orbifolds and prove the main
theorem of the paper. In Section~\ref{s:hungry}, we characterize orbifolds
having empty guts.

\begin{acknowledgments}
The authors  give special thanks to Ian Agol and Peter Shalen for helpful conversations. Much of the work in this paper was done as part of the 2015 Fairfield University REU Program, sponsored by the National Science Foundation.
\end{acknowledgments}

\section{Background and Definitions}\label{s:background}

We recall some necessary facts about orbifolds here, and refer the reader to
several excellent resources \cite{BMP03-1,CoopHodgKer00}. An $n$--orbifold is a
generalization of the notion of $n$--manifold that allows for local neighborhoods
to be modeled on the quotient of $\RR^n$ by a (possibly trivial) finite group
acting properly discontinuously. In many cases, $n$--orbifolds arise as the
quotient of a manifold by a finite group of symmetries.

\subsection{Orbifolds}\label{s:orbifolds}

An orientable \emph{$3$--orbifold} $\OO$ is a pair
$(X_{\OO}, \Sigma_{\OO})$ where $X_{\OO}$ is an orientable $3$--manifold and
$\Sigma_{\OO}$ is an embedded graph in $X_{\OO}$ with edges labeled by
integers $n\geq 2$.  The manifold $X_{\OO}$ is called the  \emph{underlying
space} of the orbifold $\OO$. The graph $\Sigma_{\OO}$ is called the \emph{singular
locus} of the orbifold. The graph $\Sigma_{\OO}$ need not be connected and may
contain components consisting of single loops (while multi-edges between vertices 
are allowed, they tend to violate the geometric assumptions we will use in this paper). 
In the case where $X_{\OO}$ is closed, $\Sigma_{\OO}$ must be a trivalent
graph. If $X_{\OO}$ has boundary, then $\Sigma_{\OO}$ may also have univalent
vertices on the boundary of $X_{\OO}$. The labeling on an edge of the
singular locus is called the \emph{order} of the singular locus along that
edge.  

The data $(X_\OO,
\Sigma_\OO)$ completely describe the orbifold.  Neighborhoods $U \subset
(X_\OO \setminus \Sigma_\OO)$ of points are modeled on $\RR^3$. Neighborhoods
$U_x$ of points $x \in \Sigma_\OO$ are modeled on $\RR^3/{G_x}$ where $G_x$
is a finite subgroup of $O(3)$. In the case where $x$ lies in an edge of
$\Sigma$ labeled $n$, $G_x \approx \ZZ_n$.  If $x$ is a vertex of $\Sigma$
meeting edges labeled $p$, $q$, and $r$, then $G_x$ is the spherical triangle
group generated by rotations of order $p$, $q$, and $r$.  Note that this
implies that $\frac1p + \frac1q + \frac1r >1$. 

We will abuse notation in this paper and allow for the
labeling of the singular locus to violate the $\frac1p + \frac1q + \frac1r
>1$ condition at a vertex. We will refer to a vertex of the singular locus as
a \emph{spherical, rigid Euclidean, or hyperbolic vertex} if $\frac1p +
\frac1q + \frac1r$ is greater than, equal to, or less than $1$, respectively.
A hyperbolic or rigid Euclidean vertex encodes a boundary component of the
orbifold. If $\frac1p + \frac1q + \frac1r \leq 1$, then the orbifold has a
boundary component obtained by doubling a triangle with angles $\pi/p$,
$\pi/q$, and $\pi/r$ with edges of the singular locus meeting the
corresponding cone points on this boundary component. The reason for this
abuse of notation is that it turns out that these ``vertices'' can usually be
treated the same, regardless of whether they are spherical, rigid Euclidean,
or hyperbolic. For this reason, when we say that an orbifold has underlying
space $S^3$, it may be the case that there are boundary components coming
from the non--spherical vertices.

\subsection{Suborbifolds}

We say that $\SS$ is an \emph{orientable $2$--dimensional suborbifold} of an
orientable $3$--orbifold $\OO$ if $\SS$ is an embedded, orientable
topological surface such $\SS \cap \Sigma$ is either empty or is a
$0$--dimensional subset of the edges of $\Sigma$. Such an $\SS$ may be given
the structure of a $2$--orbifold. The \emph{underlying space} of the orbifold
structure is the topological surface itself. The \emph{singular locus} is the
collection of points of $\SS \cap \Sigma$ labeled by the same integers as
$\Sigma$. These points are locally modeled on $\RR^2$ modulo the action of
rotation of the corresponding order. A \emph{nonorientable $2$--suborbifold}
$\SS$ of $\OO$ is an embedded topological surface in $\OO$ such that the
boundary of a regular neighborhood of $\SS$ is a connected, orientable
$2$--suborbifold of $\OO$. The singular locus of a nonorientable
$2$--suborbifold includes points labeled by integers along with arcs along
which $\SS$ is locally modeled on $\RR^2$ modulo the action of reflection.

Just as in the case of manifolds, orbifolds have fundamental groups denoted
by $\pi_{1}(\cdot)$ (corresponding to the groups acting on their
universal covers), and a $2$--suborbifold of a 3-orbifold is called
\emph{incompressible} if its fundamental group injects into the fundamental
group of its ambient space and \emph{essential} if it is incompressible but
not isotopic to a boundary component. 

A \emph{turnover} is a $2$--suborbifold
with underlying space a $2$--sphere that meets $\Sigma$ in three points. These
turnovers are called \emph{spherical, Euclidean (rigid),} or
\emph{hyperbolic} if their three point labels satisfy (respectively) the
above conditions for the correspondingly-named vertices of $\Sigma$.  A
\emph{turnover-reduced} 3-orbifold is one in which every embedded turnover
encloses the cone on a vertex of $\Sigma$, and a $3$--orbifold is
\emph{irreducible} when every embedded $2$--suborbifold that is the quotient of
a $2$--sphere by a discrete group bounds a quotient of a $3$--ball by the same
discrete group. 

One key definition we will use involves splitting a $3$-orbifold along an embedded 
$2$-suborbifold. This yields an orbifold with boundary, in the sense mentioned in 
Subsection \ref{s:orbifolds}. We denote by $\OO \split \SS$ the path metric completion of $\OO
\setminus \SS$. If $\SS$ is an embedded orientable $2$--suborbifold of $\OO$,
then $\OO \split \SS$ consists of two $3$--orbifolds with boundary equal
to $\SS$. 

When a $3$--orbifold has rigid and hyperbolic vertices, we consider
these as boundary components by deleting an open neighborhood of the
vertices. In this case, we say a $3$--orbifold has a \emph{hyperbolic structure}
if deleting the rigid boundaries yields a space with  a complete hyperbolic
geometric structure in which each rigid vertex becomes a Euclidean (rigid)
cusp and in which the hyperbolic vertices become totally geodesic boundary
components in the geometric structure. 

Important $2$--suborbifolds we will consider are orbifold tori and annuli, which
are best thought of as quotients of standard tori and annuli by symmetries.
Figure \ref{F:OrbifoldAnnuli} depicts the different types of orbifold annuli.
In the figure, the symbol * indicates a part of the orbifold that has the
quotient structure associated to a reflection. These annuli can occur in a
$3$--orbifold in many ways. As an example, consider Figure \ref{F:essann1}$(i)$,
with the lone singular loop labeled by $e$ set equal to $2$: an orbifold annulus as in the lower
left of Figure \ref{F:OrbifoldAnnuli} appears as the annular region between
this loop and the equator of the spherical boundary of the orbifold depicted,
with the edge labeled 2 acting as an orbifold mirror for the annulus. An
(orientable) orbifold torus is either a topological torus or a topological
$2$--sphere with four singular points each labeled 2.

\begin{figure}
	\labellist
	\small\hair 2pt
	\pinlabel {$\partial$} [] at 85 205
	\pinlabel {$\partial$} [ ] at 85 108
	\pinlabel {$\partial$} [ ] at 85 79
	\pinlabel {$\ast$} [ ] at 49 12	
	\pinlabel {$\partial$} [ ] at 220 205
	\pinlabel {$2$} [ ] at 130 112
	\pinlabel {$2$} [ ] at 239 112
	\pinlabel {$\partial$} [ ] at 185 82
	\pinlabel {$\ast$} [ ] at 132 44
	\pinlabel {$\ast$} [ ] at 237 44
	\pinlabel {$2$} [ ] at 130 0
	\pinlabel {$\ast$} [ ] at 185 -3
	\pinlabel {$2$} [ ] at 239 0
	\pinlabel {$\partial$} [ ] at 286 205
	\pinlabel {$\ast$} [ ] at 332 160
	\pinlabel {$2$} [ ] at 263 112
  \endlabellist
  \centering
	\includegraphics[scale=1.0]{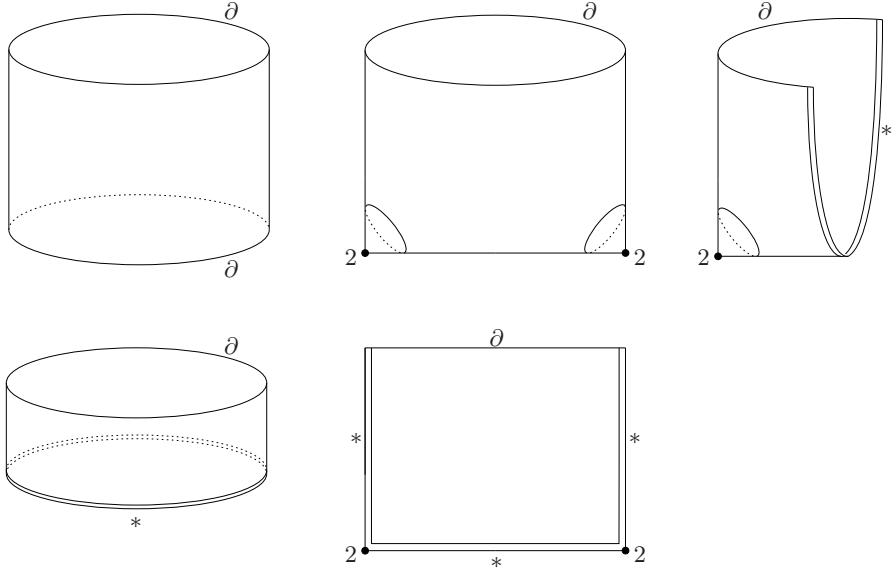}
	 \caption{The different types of orbifold annuli. The symbols $\partial$ and * indicate orbifold boundary and orbifold mirrors, respectively.}		
	\label{F:OrbifoldAnnuli}
\end{figure}

If a $2$--suborbifold is not incompressible, it is called
\emph{compressible}. In our case, this implies the existence of an orbifold
disk (a quotient of a disk by a symmetry) whose boundary lies on the
$2$--orbifold but that does not also bound an orbifold disk on the
$2$--orbifold. Examples of compressibility in the case of annuli are shown in
Figure \ref{F:AnnuliComps} (with the compressing disks in grey).
\begin{figure}
\labellist
\small\hair 2pt
    \pinlabel {$n$} [ ] at 58 62
    \pinlabel {$2$} [ ] at 173 42
    \pinlabel {$2$} [ ] at 218 42
    \pinlabel {$n$} [ ] at 201 68
	 \endlabellist
	    	\includegraphics{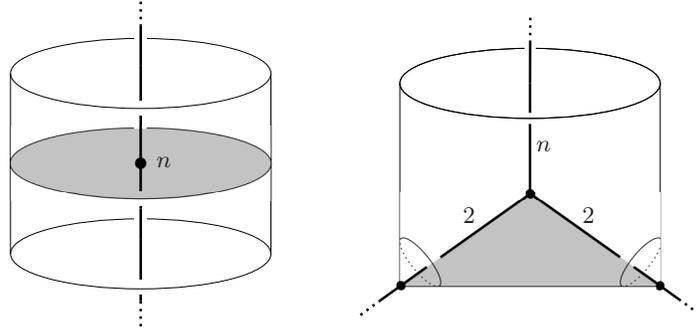}
	      \caption{Orientable compressible Annuli. In both cases, $n\geq
		  1$.}		
		\label{F:AnnuliComps}
\end{figure}

An \emph{orbifold rational tangle} is the space obtained by taking either (1)
a solid ball with two unknotted, integer-labeled strands that lie in its
interior except at their endpoints, which lie on the boundary of the ball; or
(2) a solid ball with five unknotted, integer-labeled strands that lie in its
interior in the shape of an ``H,'' except for the four endpoints of the
``H,'' which lie on the boundary of the ball; and performing an isotopy of
the boundary of the ball that permutes the four singular points. If we assume
a 3-orbifold $\OO$ satisfies the hypotheses of Theorem \ref{T:VolumeGuts},
then the turnover reducibility of $\OO$ implies the following simple
proposition. 

\begin{proposition}\label{P:incomp} 
\sloppy
Let $\OO$ be a compact, orientable,
  irreducible, turnover--reduced, $3$--orbifold with underlying space $S^3$.
  Suppose that $\SS = S^2(n_1,n_2,n_3,n_4)$ or $\SS= D_*^2(n_1,n_2)$ is a
  suborbifold of $\OO$. Then $\SS$ is incompressible in $\OO$ if and only if
  no component of the $\OO \split \SS$ is an orbifold rational tangle.
\end{proposition}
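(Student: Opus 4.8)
The plan is to prove Proposition~\ref{P:incomp} by establishing the two directions of the biconditional separately, with the bulk of the work going into the ``only if'' direction. The key structural observation is that $\SS$ (of type $S^2(n_1,n_2,n_3,n_4)$ or $D_*^2(n_1,n_2)$) separates $\OO$ into two pieces, and each piece is a ball-type $3$--orbifold whose boundary is a four-punctured sphere orbifold. I would first recall that an orbifold rational tangle, by the given definition, has fundamental group that is a free product of cyclic groups (coming from the two or five unknotted strands), and in particular its boundary four-times-punctured sphere orbifold has a \emph{non-injective} fundamental group: the boundary curves that are meridians around the strands become trivial or get identified in the tangle. This gives the contrapositive of the ``if'' direction almost immediately.

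\medskip\noindent\textbf{The ``if'' direction (compressibility from a rational tangle).} Suppose some component $\OO_0$ of $\OO \split \SS$ is an orbifold rational tangle. The plan is to exhibit a compressing orbifold disk directly. In the tangle, the strands are unknotted, so there is an embedded orbifold disk in $\OO_0$ whose boundary is an essential orbifold curve on $\SS$ (for instance, a curve separating two of the four cone points from the other two, which bounds a disk in the ball meeting the singular strands appropriately). Because the strands are unknotted and lie in standard position, this curve does not bound an orbifold disk on $\SS$ itself, so $\SS$ is compressible. I would make this precise by tracking how the disk intersects the two (or five) singular strands and verifying its boundary is one of the essential curves on the four-punctured sphere, using the pictures analogous to Figure~\ref{F:AnnuliComps}.

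\medskip\noindent\textbf{The ``only if'' direction (no rational tangle from compressibility).} This is where I expect the real work to be, and it is essentially the contrapositive: if $\SS$ is compressible, I must produce a rational-tangle component. Suppose $\SS$ is compressible, so there is a compressing orbifold disk $D$ with $\partial D$ essential on $\SS$ and $D$ lying in one component $\OO_0$ of $\OO \split \SS$. The strategy is to compress along $D$ and analyze the resulting simpler suborbifold, then invoke irreducibility and turnover-reducibility of $\OO$. Compressing a four-punctured sphere along an essential curve yields either spherical $2$--suborbifolds or turnovers (two-sphere meeting $\Sigma$ in fewer points); since $\OO$ is irreducible these spheres bound balls and since $\OO$ is turnover-reduced each resulting turnover bounds the cone on a vertex. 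Working backwards, this forces $\OO_0$ to be assembled from balls and cones-on-vertices glued along turnovers, which is precisely the structure of an orbifold rational tangle (the unknottedness of the strands comes from irreducibility ruling out knotting).

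\medskip The main obstacle will be the case analysis in the ``only if'' direction: a four-punctured sphere orbifold admits several topologically distinct essential compressing curves (separating the cone points in different ways, and depending on whether the cone orders are equal), and each compression must be shown to terminate, after finitely many steps, in the standard tangle picture rather than in some knotted or linked configuration. The hypotheses of irreducibility and turnover-reducibility are exactly the tools that prevent pathological configurations — irreducibility kills essential spheres and unknots the strands, while turnover-reducibility guarantees that every turnover that appears is inessential (bounding a cone on a vertex). I would organize this as a sequence of reductions, at each stage replacing a compressed piece by a ball or a cone on a trivalent vertex, and confirm that the bookkeeping of singular strands reproduces either the two-strand or the ``H''-shaped five-strand model in the definition of an orbifold rational tangle.
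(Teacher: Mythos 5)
Your proposal takes essentially the same route as the paper: for the hard direction, cap the compressing disk $D$ off with the two halves of $\SS$ cut along $\partial D$ (each necessarily containing exactly two cone points, since $\partial D$ cannot bound an orbifold disk on $\SS$), then use irreducibility and turnover-reducedness to conclude that the resulting spherical $2$--suborbifolds and turnovers bound quotients of balls and cones on vertices, exhibiting the component as a two-strand or ``H''-shaped tangle; conversely, a rational tangle component visibly yields a compressing disk. Two small corrections: the paper needs only a single compression rather than your iterated ``sequence of reductions,'' precisely because every compressing curve on $S^2(n_1,n_2,n_3,n_4)$ already separates the cone points two-and-two; and your opening claim that $\SS$ separates $\OO$ into two pieces fails when $\SS = D_*^2(n_1,n_2)$ is one-sided (then $\OO \split \SS$ has a single component), which the paper handles in one line by running the identical argument on the boundary of a regular neighborhood of $\SS$, an orientable $S^2(n_1,n_1,n_2,n_2)$.
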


\begin{proof} Suppose $\SS = S^2(n_1,n_2,n_3,n_4)$ is compressible. Then a 
non-trivial loop $C$ on $\SS$ bounds an orbifold disk $D$ in (at least) one 
component of $\OO \split \SS$. Because $C$ is non-trivial on $\SS$, 
$C$ separates $\SS$ into two disks, each with two singular points of 
$\SS$. Attaching $D$ to these disks and using the 
irreducibility/turnover--reducibility of $\OO$ implies that this component of 
$\OO \split \SS$ is an orbifold rational tangle. If one of the components of 
$\OO \split \SS$ is an orbifold rational tangle, then 
this gives a compression of $\SS$ in $\OO$. 

If $\SS = D_*^2(n_1,n_2)$, then 
the same proof works, using the boundary of a regular neighborhood of 
$\SS$.
\end{proof}

\subsection{Pared orbifolds and guts}\label{s:guts}

We now introduce the definitions that will be central to the proof of our
theorem.

\begin{definition}\label{d:pared}
A \textbf{pared orbifold} is a pair $(\OO, P)$, where $\OO$ is a compact, orientable, irreducible 3-orbifold 
and $P \subset \partial \OO$ is a union of essential orbifold annuli and tori (possibly empty) such that 
		\begin{enumerate}
			\item every abelian, noncyclic subgroup of $\pi_{1}(\OO)$ is conjugate to a subgroup of 
					the fundamental group of a component of $P$, and
			\item every map of an orbifold annulus $(F, \partial F) \to (\OO, P)$ that is $\pi_{1}-$injective deforms, as a 
					map of pairs, into $P$.
		\end{enumerate}
$P$ is called the \textbf{parabolic locus} of $(\OO,P)$, and we define $\partial_{0}\OO$ to be
$\partial \OO -  \rm{int}(P)$. 
\end{definition}

Thurston proved (\cite[Theorem 6.5]{BMP03-1}, \cite{Boileau-Porti,
  Canary-McCullough, Kapovich,
Otal1, Otal2})  
that an oriented, turnover-reduced pared
orbifold with nonempty boundary (at least one component of which is not a
hyperbolic turnover) admits a geometrically finite hyperbolic structure. The
characteristic subpair theory for 3-manifolds \cite{Jaco-Shalen, Johansson}
holds in the category of 3-orbifolds \cite{Bonahon-Siebenmann}. In particular
(\cite[Remarks following Theorem 4.17]{BMP03-1}, \cite[Section 11, page
88]{Morgan}), we have the following characterization of the components of the
characteristic suborbifold:

\begin{characteristic}\label{C:char}
If $(\OO,P)$ is a pared orbifold with incompressible
$\partial_{0} \OO$, then there is a subpair $(N,S) \subset (\OO, \partial_{0}
\OO)$, which is a (possibly disconnected) suborbifold, uniquely determined up
to isotopy of pairs and whose components are of the following three types, up
to homeomorphism:

\begin{enumerate} 
  \item $(T \times I, \emptyset)$, a neighborhood of an
	  orbifold torus component $T \subset P$ (in our case, a neighborhood of
	a rigid cusp), or 
  \item  $(R,A)$, where $R$ is an orbifold solid torus
	and $A$ is a nonempty union of essential orbifold annuli in $\partial R$,
  or 
\item $(I{\rm-bundles}, \partial I{\rm-subbundles})$, with any
	$I$-bundles in the boundary of this type of component consisting of
	orbifold annuli that are not parallel into $\partial \OO$.
\end{enumerate} 
\end{characteristic}

For each component of type (2) above, $\overline{\partial R - A}$ is a union
of essential orbifold annuli in $(\OO, \partial_{0}\OO)$. We thicken these
and consider them as $I$-bundles over the annuli with their associated
$\partial I$-subbundles lying in $\partial_{0}\OO$, and add them to the
components of type (3) to form an $(I{\rm-bundle}, \partial
I{\rm-subbundle})$ pair $(W, \partial_{0} W) \subset (\OO, \partial_{0}\OO)$
called the \emph{window} of $(\OO, \partial_{0}\OO)$. The frontier of the
window $\partial_{1}W = \overline{\partial W - (\partial_{0}W \cup P)}$
consists of essential annuli in $(\OO, \partial_{0} \OO)$. Following Agol
\cite[page 3275]{Agol2cusp}, we observe that the pair 
\[(\overline{\OO - W},
\overline{\partial(\overline{\OO - W}) - \partial_{0} \OO})\]
is a pared
suborbifold of $\OO$ whose parabolic locus consists of these essential annuli
(up to isotopy). We have
the following:

\begin{definition}\label{d:guts}
The pared suborbifold $(\overline{\OO - W}, \overline{\partial(\overline{\OO
- W}) - \partial_{0} \OO})$ is denoted $\textbf{guts}(\OO, P)$.  If $\OO$ is
a compact, orientable 3-orbifold whose interior admits a hyperbolic metric of
finite volume, and $(X, \partial X) \subset (\OO, \partial \OO)$ is an
essential 2-orbifold, then we define $\textbf{guts}(X)$ to be \\ $guts(\OO
\split X, \partial \OO \split \partial X)$.
\end{definition} 

\noindent
We note that if $\partial_{1}W = \overline{\partial W - (\partial_{0}W \cup
P)}$ denotes the frontier of the window of $(\OO, \partial_{0}\OO)$, and if
$L$ is a component of $\overline{\OO - W}$, then either 
\begin{itemize}
	\item $L$ is a solid orbifold torus and $\partial_{1} W \cap L$ contains
	  at least one essential orbifold annulus on $\partial L$, or 
	\item all essential orbifold annuli in $(L, \overline{\partial_{0} \OO
	\cap L})$ are parallel into 
	\[(\partial_{1} W \cap L,
	\partial(\partial_{1}W \cap L)).\]
\end{itemize}
Components of the latter type are called \emph{pared acylindrical orbifolds},
and they admit hyperbolic metrics with totally geodesic boundary. 

A key tool we will use in this paper is the following (abbreviated here from
the complete, more powerful) result of Agol, Storm, and Thurston, in
combination with a result of Miyamoto, applied in the orbifold category (cf.
\cite[Theorem 9.1]{agol-storm-thurston}, \cite{miyamoto}):

\begin{theorem}\label{T:AST}
	Let $\OO$ be a compact 3-orbifold with interior a hyperbolic orbifold of
	finite volume, and let $\SS$ be an embedded incompressible 2-suborbifold
	in $\OO$. Then 
	\[\Vol(\OO) \geq -V_{8} \chi(guts(\SS)) = -\frac{V_{8}}{2}\chi(\partial
	(guts(\SS))).\]
\end{theorem}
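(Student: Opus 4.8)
The plan is to obtain this inequality by combining the volume estimate of Agol, Storm, and Thurston \cite{agol-storm-thurston} with Miyamoto's lower bound \cite{miyamoto}, both transported to the orbifold setting. First I would realize the incompressible suborbifold $\SS$ as a least-area minimal $2$-suborbifold in the hyperbolic structure on the interior of $\OO$; incompressibility guarantees such a representative and ensures that the pieces of $\OO \split \SS$ remain $\pi_1$-injective. Splitting along this representative yields $(\OO \split \SS, \partial \OO \split \partial \SS)$, a pared orbifold with incompressible $\partial_0$, so the characteristic suborbifold decomposition (Characteristic Suborbifold~\ref{C:char}) applies and separates it into its window $W$ and the complementary pieces whose union is exactly $guts(\SS)$. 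Each such component is a pared acylindrical orbifold and hence, by Thurston's hyperbolization, carries a hyperbolic metric with totally geodesic boundary; write $\Vol_{\mathrm{geod}}(guts(\SS))$ for the resulting total volume.

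The heart of the matter, and the step I expect to be the main obstacle, is the Agol--Storm--Thurston comparison
\[ \Vol(\OO) \;\geq\; \Vol_{\mathrm{geod}}(guts(\SS)). \]
This is not a topological statement: the geodesic-boundary metric on the guts differs from the restriction of the ambient metric on $\OO$, and the content is precisely that replacing one by the other does not increase volume. Following \cite{agol-storm-thurston}, I would prove it by doubling each guts component along its totally geodesic boundary and using the mean-convexity of the minimal representative $\SS$, together with their straightening and volume-comparison machinery, to show that twice $\Vol(\OO)$ dominates the volume of the doubled guts, then halving (using $\Vol(\text{double}) = 2\,\Vol_{\mathrm{geod}}(guts(\SS))$). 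Reproving this in the orbifold category is where the real care is needed: one must verify existence and embeddedness of least-area minimal suborbifolds in the presence of cone points and mirrors, check that the monotonicity and comparison estimates survive these singularities, and confirm that the doubling takes place along a genuinely totally geodesic orbifold boundary.

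Finally I would invoke Miyamoto's theorem \cite{miyamoto}, in its orbifold formulation, that a compact hyperbolic $3$-orbifold with totally geodesic boundary satisfies $\Vol_{\mathrm{geod}}(\cdot) \geq -V_8\,\chi(\cdot)$, with the extremal pieces assembled from regular ideal octahedra. Applied to $guts(\SS)$ and summed over components (using additivity of the orbifold Euler characteristic) this gives $\Vol_{\mathrm{geod}}(guts(\SS)) \geq -V_8\,\chi(guts(\SS))$, and chaining it with the comparison above yields the first displayed bound. The second equality is then purely topological: for a compact $3$-orbifold $N$ the double along $\partial N$ is closed and odd-dimensional, so its orbifold Euler characteristic vanishes and $\chi(N) = \frac{1}{2}\chi(\partial N)$; taking $N = guts(\SS)$ rewrites $-V_8\,\chi(guts(\SS))$ as $-\frac{V_8}{2}\chi(\partial(guts(\SS)))$, completing the derivation.
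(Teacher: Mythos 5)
The paper never proves Theorem~\ref{T:AST}: it is quoted as a known result, namely \cite[Theorem 9.1]{agol-storm-thurston} combined with Miyamoto's bound \cite{miyamoto} transported to the orbifold category, and your sketch unpacks exactly those two cited ingredients (the minimal-surface splitting and volume comparison, then the geodesic-boundary lower bound $\Vol \geq -V_8\chi$) together with the standard doubling identity $\chi(N)=\frac{1}{2}\chi(\partial N)$, so you are following essentially the same route the paper relies on. The only caveat, which does not change the verdict, is that the Agol--Storm--Thurston comparison step actually rests on Ricci flow with surgery and Perelman's volume monotonicity rather than a ``straightening'' argument, a point your proposal correctly flags as the main technical burden even if it names the machinery loosely.
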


\noindent
Accordingly, our strategy for the proof will be to identify the essential
annuli in $\OO \split \SS$ in order to effectively describe
$guts(\SS)$ and apply the volume bound of Theorem \ref{T:AST}.

\section{Proof of the Main Theorem}\label{s:proof}

In this section, we prove the main theorem.  Before doing so, we
need a technical lemma that describes how orientable, essential orbifold annuli can
arise in certain $3$-orbifolds. To prove the main theorem, this lemma will
be used to describe the possibilities for $guts(\SS)$, where $\SS$ is an
incompressible $2$--suborbifold of an orbifold as described in the hypotheses
of Theorem~\ref{T:VolumeGuts}.

\begin{lemma}\label{l:annuli}
Let $\QQ$ be a compact, orientable, irreducible, atoroidal, turnover-reduced $3$-orbifold
with underlying space $D^3$. Assume $\QQ$ has four singular points on $\partial D^{3}$ 
(labeled by their corresponding orders as $a$, $b$, $c$, and $d$), and let $\SS$ 
denote the $2$-orbifold $\partial D^{3}$ together with these four points.  
Let $P$ be the union of the rigid, Euclidean
boundary components and let $\partial_0 \QQ = \partial \QQ - P$.

Then at most one of the following holds:
\begin{enumerate}
  \item $(\QQ, \partial_0 \QQ)$ contains a single, essential, nonsingular
	annulus,
  \item $(\QQ, \partial_0 \QQ)$ contains a pair of essential, non--parallel
	$D^2(2,2)$ orbifold annuli, or
  \item $(\QQ, \partial_0 \QQ)$ contains a single essential, $D^2(2,2)$
	orbifold annulus.
\end{enumerate}
Moreover, the essential annuli are configured as in
Figure~\ref{F:essann1}.

\end{lemma}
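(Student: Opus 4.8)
The plan is to determine the topological type of an arbitrary orientable essential orbifold annulus $A$ in $(\QQ,\partial_0\QQ)$ from its orbifold Euler characteristic, and then to read off its position relative to the singular locus $\Sigma\subset D^3$. Since $A$ is orientable with $\chi^{orb}(A)=0$, and since $\chi^{orb}(D^2(n_1,n_2))=1/n_1+1/n_2-1$ while a topological annulus carrying any cone points has negative orbifold Euler characteristic, $A$ must be either a nonsingular annulus disjoint from $\Sigma$, or a copy of $D^2(2,2)$ meeting $\Sigma$ transversally in two points lying on order-$2$ edges. These are exactly the two \emph{orientable} annuli of Figure~\ref{F:OrbifoldAnnuli} (the mirrored types being non-orientable), so the three itemized possibilities exhaust the combinatorial types. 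Moreover the boundary of any essential annulus lies on the component $\SS$ of $\partial_0\QQ$, since a turnover boundary component arising from a non-spherical vertex carries no essential simple closed curve.

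Next I would analyze the boundary data on $\SS=S^2(a,b,c,d)$, using that its essential simple closed curves separate the four cone points into two pairs and that two disjoint such curves are parallel. For a nonsingular annulus $A$ with $\partial A=c_1\cup c_2$, the curves cobound a subannulus $B\subset\SS$ which (as $c_1,c_2$ are essential) carries no cone points, so $A\cup B$ is a nonsingular $2$--sphere bounding a ball $R$ in $D^3$; essentiality of $A$ forbids $R$ from being a product, so $R$ must contain a nontrivial, necessarily closed, component of $\Sigma$, which irreducibility and atoroidality force to be a single unknotted singular loop. This is precisely the configuration of Figure~\ref{F:essann1}$(i)$. For a $D^2(2,2)$ annulus, its single boundary circle bounds the underlying disk, which crosses two order-$2$ edges of $\Sigma$; capping the disk off and invoking turnover--reducedness to forbid the crossed edges from being enclosed by turnovers pins the annulus into the pictures of Figure~\ref{F:essann1}$(ii)$ and $(iii)$, and the same analysis shows that if a second such annulus is present it must be non-parallel to the first, giving case (2).

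To prove that \emph{at most one} of the three cases holds, I would argue by contradiction. Assuming two configurations coexist, I would isotope the annuli to meet $\Sigma$ and one another minimally; orbifold innermost-disk and outermost-arc arguments, valid by irreducibility, then make them disjoint, after which all of their boundary circles are parallel on $\SS$. Tubing the annuli together along subannuli of $\SS$ produces a closed embedded orbifold surface that one of the hypotheses forbids: most directly, two copies of $D^2(2,2)$ tube into an $S^2(2,2,2,2)$, which is an orbifold torus and contradicts atoroidality, while the mixed cases recombine into an orbifold torus or into a turnover that fails to bound a cone on a vertex of $\Sigma$, contradicting turnover--reducedness. Non-parallelism of the constituents is what guarantees the assembled surface is essential, and the same construction simultaneously fixes the counts asserted in the lemma (a single annulus in (1) and (3), and exactly two non-parallel annuli in (2)).

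The main obstacle will be the cut-and-paste step in the orbifold category: arranging that two coexisting annuli can genuinely be made disjoint even though the $D^2(2,2)$ annuli cross $\Sigma$, and then verifying that the closed orbifold surface produced is truly essential rather than compressible or boundary-parallel. This is exactly where the standard manifold innermost-curve arguments must be run relative to $\Sigma$ and combined with turnover--reducedness to discard the inessential outcomes. By contrast, once disjointness and essentiality are secured, the remaining task---checking that each surviving single-case configuration is the one drawn in Figure~\ref{F:essann1}---reduces to a finite enumeration of how the strands of $\Sigma$ can meet the caps cut off by the annulus boundaries.
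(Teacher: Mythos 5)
Your classification of the orientable annulus types by orbifold Euler characteristic, and your observation that all essential annuli must have boundary on $\SS$, both match the paper. But the core of your argument breaks down in two places. First, in the nonsingular case: the annulus $A$ glued to the subannulus $B\subset\SS$ along \emph{both} boundary circles is a torus, not a ``nonsingular $2$--sphere bounding a ball.'' This is not a cosmetic slip. If $A\cup B$ really were a nonsingular sphere, then irreducibility would force the region it bounds to be an honest ball containing \emph{no} singular locus, contradicting essentiality of $A$ --- your framework could never produce the loop $e$ of Figure~\ref{F:essann1}$(i)$. The paper's argument splits $D^3$ along $A$ into a ball (bounded by $A$ and the two disk components of $\SS\setminus\partial A$) and a solid torus $T$ (bounded by $A$ and the annular component), and then applies atoroidality to $\partial T$ to force $\Sigma\cap T$ to be a single core loop.

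Second, and more seriously, your mutual-exclusivity argument misuses atoroidality: you claim that tubing two disjoint $D^2(2,2)$ annuli along a subannulus of $\SS$ yields an $S^2(2,2,2,2)$ orbifold torus ``which contradicts atoroidality.'' Atoroidality forbids only \emph{essential} tori, and the torus you build bounds an orbifold solid torus (the ``middle'' region between $A_1$ and $A_2$ in Figure~\ref{F:essann1}$(ii)$, exactly the piece discarded from the guts in the proof of the main theorem), so it is inessential and no contradiction arises. Indeed, if your argument were valid it would prove that case (2) of the lemma never occurs --- but the lemma asserts this configuration exists. The paper uses atoroidality in the opposite direction: when such a torus appears (e.g.\ when $\{a,b\}=\{2\}$), it must bound a solid orbifold torus, and this \emph{forces} $n=1$ and the two annuli to be parallel, a constraint rather than a contradiction. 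The actual exclusivity arguments in the paper are the linking argument of Figure~\ref{F:only1annulus} (ruling out a second nonsingular annulus, or singular annuli inducing a different partition of the cone points) and the essential-torus construction of Figure~\ref{F:esstorus1} (ruling out singular annuli coexisting with a nonsingular one). Finally, you never derive the label constraints that are part of the conclusion ``configured as in Figure~\ref{F:essann1}'': that neither $\{a,b\}$ nor $\{c,d\}$ equals $\{2\}$ in case $(i)$, and that in case $(ii)$ at most one pair equals $\{2\}$, with $n=1$ and parallel annuli forced in that event. These come precisely from the correct use of atoroidality that your proposal replaces with a false contradiction.
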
 

Note that this lemma doesn't preclude the presence of non--orientable
essential orbifold  annuli. However, the boundary of the regular neighborhood a
non--orientable orbifold annulus is an orientable annulus and will be in one
of the three cases described in the lemma.

\begin{figure}
  \labellist
  \small\hair 2pt
  \pinlabel {$a$} [ ] at 65 149
  \pinlabel {$b$} [ ] at 86 149
  \pinlabel {$R$} [ ] at 75 79
  \pinlabel {$c$} [ ] at 64 4
  \pinlabel {$d$} [ ] at 86 5
  \pinlabel {$e$} [ ] at 130 75
  \pinlabel {$(i)$} [ ] at 75 -8
  \pinlabel {$(ii)$} [ ] at 250 -8
  \pinlabel {$a$} [ ] at 243 148
  \pinlabel {$b$} [ ] at 268 148
  \pinlabel {$R_1$} [ ] at 250 118
  \pinlabel {$2$} [ ] at 225 90
  \pinlabel {$2$} [ ] at 277 90
  \pinlabel {$A_1$} [] at 335 100
  \pinlabel {$A_2$} [] at 335 55
  \pinlabel {$n$} [ ] at 250 80
  \pinlabel {$2$} [ ] at 223 63
  \pinlabel {$2$} [ ] at 277 63
  \pinlabel {$R_2$} [ ] at 250 36
  \pinlabel {$c$} [ ] at 243 7
  \pinlabel {$d$} [ ] at 268 8
				  \endlabellist
  \includegraphics[scale=0.95]{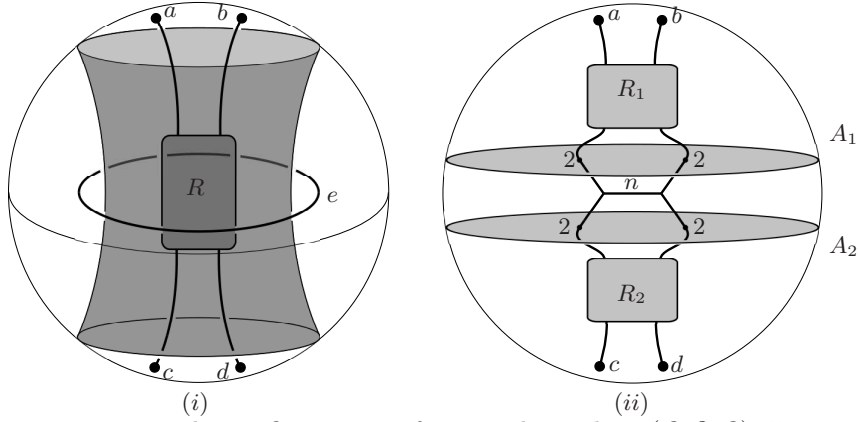}
		  \caption{The configurations of essential annuli in $(\QQ,
			\partial_{0} \QQ)$.  In $(i)$, neither of the sets
			$\{a,b\}$, $\{c,d\}$ can equal $\{2\}$. In $(ii)$, without
			loss of generality, $\{c,d\}$ cannot equal $\{2\}$, and $n \geq
			2$ unless $\{a,b\}$ = \{2\}, in which case $n=1$. The essential
		  annuli in $(ii)$ are $A_1$ and $A_2$. If $n = 1$, then $A_1$ and
		$A_2$ are parallel.} 
			\label{F:essann1}
\end{figure}

\begin{proof} 
  There are 5 types of orbifold annuli to consider, as in Figure
  \ref{F:OrbifoldAnnuli}.  The only components of $\partial_{0} \QQ$ are
  $\SS$ and any hyperbolic turnovers corresponding to the boundary components of $\QQ$.
  Because any closed loop on a hyperbolic turnover bounds an orbifold disk on
  the turnover, and because hyperbolic turnovers cannot have more than one
  cone point of order 2, the irreducibility of $\QQ$ implies that no
  essential annulus can have any part of its boundary carried in the
  hyperbolic turnover boundary components of $\QQ$. So we need only consider
  essential annuli with boundary contained in $\SS$. 
  
Let $A$ be such an essential annulus. If $A$ is nonsingular, then its two
boundary circles must consist of parallel curves on $\SS$ that separate $\SS$
into three regions: One annulus between the two components of $\partial A$,
and, without loss of generality, two disks containing $\{a,b\}$ and
$\{c,d\}$, respectively. (See Figure \ref{F:essann1}$(i)$ as an aid to this
discussion.) The two disks together with $A$ bound a topological ball $B$,
and the annulus together with $A$ bounds a topological solid torus $T$.
Because $A$ is essential, some subset of $\Sigma$ must lie in $T$ (and
similarly for the box marked $R$ in the figure). However, since $\QQ$ is
atoroidal, the only possibility is that one single
loop of $\Sigma$ lies at the core of $T$ (labeled $e$ in the figure).  Notice
that, once $A$ has been identified, it is not possible for another
nonsingular essential annulus (separating, for instance, $\{a,d\}$ from
$\{b,c\}$ on $\SS$) to exist. See Figure \ref{F:only1annulus}. This is
because such an annulus would force, by the same argument above, the
existence of another loop in $\Sigma$ (shown winding from north to south, in
the figure) that would have to be contained in $B$ but \emph{also} contained
in a solid torus (similar to $T$ above, but with one longitudinal annulus in
its boundary contained in $\SS$ and separating $\{a,c\}$ from $\{b,d\}$) that
retracts to $\SS$ without its core curve having to cross through the core of
$T$ (labeled $e$ in the figure).
\begin{figure}
  \labellist
  \small\hair 2pt
  \pinlabel {$a$} [ ] at 50 141
  \pinlabel {$b$} [ ] at 103 141
  \pinlabel {$e$} [ ] at 130 93
  \pinlabel {$R$} [ ] at 75 80
  \pinlabel {$c$} [ ] at 48 16
  \pinlabel {$d$} [ ] at 103 16
\endlabellist
  \includegraphics{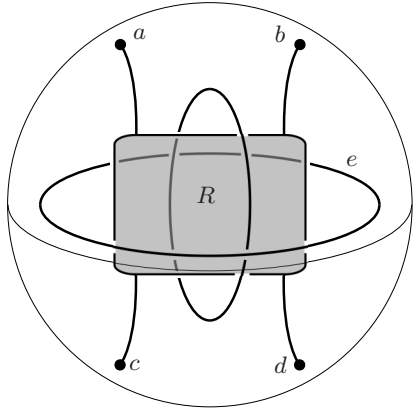}
  \caption{The two closed curves linking the region labeled $R$ can be used
  to show that there can only be one essential nonsingular annulus.}
		\label{F:only1annulus}
	  \end{figure}
We note, further, that at least one integer from each of the pairs $\{a,b\}$
and $\{c,d\}$ must be greater than 2. If not, then it is possible to form an
essential torus in $\QQ$, contradicting the fact that $\QQ$ is atoroidal. See
Figure \ref{F:esstorus1}. This argument also rules out the existence of any
essential singular annulus (i.e., a disk with
one or two order two singular points), and justifies Figure
\ref{F:essann1}$(i)$.

\begin{figure} 
  \labellist
  \small\hair 2pt
  \pinlabel {$2$} [ ] at 65 148
  \pinlabel {$2$} [ ] at 88 148
  \pinlabel {$R$} [ ] at 75 79
  \pinlabel {$c$} [ ] at 64 6
  \pinlabel {$d$} [ ] at 87 6
  \pinlabel {$e$} [ ] at 126 84
\endlabellist
  \includegraphics{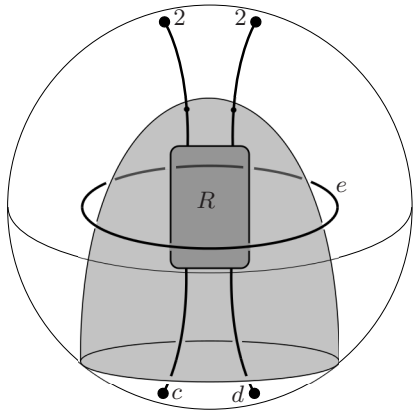}
		  \caption{The indicated annular disk with two singular points
		  determines an essential torus in $\OO$.}		
	\label{F:esstorus1}
\end{figure}

Notice that if the core curve $e$ of $T$ is labeled by 2, then we obtain
another, nonorientable essential annulus, as described in Section
\ref{s:background} (in Figure \ref{F:essann1}$(i)$, with its one boundary
curve the ``equator'' of $\SS$ and its one mirrored edge along the curve
labeled $e$). We note, for later, that the boundary of a regular neighborhood
of this annulus is an essential orientable annulus, as described in the first
part of this proof, that separates the regular neighborhood from the rest of
$\QQ$ as an $I$-bundle. 

If there is no nonsingular annulus, then there may exist up to two essential
annuli, each of which with underlying space a disk and with two cone points
of order 2. Because $\QQ$ is irreducible, the boundary of each such annulus
must separate the cone points of $\SS$ into pairs, say, without loss of
generality, $\{a,b\}$ from $\{c,d\}$. (Note: for topological reasons, just as
in the argument accompanying Figure \ref{F:only1annulus}, if there are two
such annuli, then they must both separate  $\{a,b\}$ from $\{c,d\}$.)
Because $\SS$ is not an orbifold torus (this would violate either its
incompressibility or the fact that $\QQ$ is atoroidal), then supposing such
annuli exist leads to two possibilities: neither of $\{a,b\}$ or $\{c,d\}$ is
equal to $\{2\}$, or one of these sets is equal to $\{2\}$. See Figure
\ref{F:essann1}$(ii)$. Suppose, without loss of generality, that $a$ and $b$
are both 2. Then we are able, by letting $A$ be the annulus separating the
central arc labeled $n$ of the figure from the region $R_{2}$, to form an
orbifold torus by attaching the boundary of a disk in $\SS$ that  contains
$\{a,b\}$ to $\partial A$. Since $\QQ$ is atoroidal, this orbifold torus
would bound a solid orbifold torus, forcing $n=1$ (so that that arc is not
properly a part of the singular locus), the two supposed annuli to be
parallel, and $R_{1}$ to be equal to a solid orbifold torus.  The other case,
in which neither of $\{a,b\}$ or $\{c,d\}$ is equal to $\{2\}$ and $n \geq
1$, may occur.

Note that, in the cases considered in the above paragraph, there can be no
part of the singular locus that surrounds the part of the singular locus in
Figure \ref{F:essann1}$(ii)$, as this would give rise to an essential torus
in $\QQ$ similar to the one illustrated in Figure
\ref{F:esstorus1}.

Finally, there is the possibility that $A$ is nonorientable and has either
one singular point or two corner points (there are two such annuli, listed in
Figure \ref{F:OrbifoldAnnuli}). In this instance, the boundary of a regular
neighborhood of $A$ falls into the category of Figure \ref{F:essann1}$(ii)$,
and just as in the case of a nonorientable annulus in Figure
\ref{F:essann1}$(i)$, the boundary of this regular neighborhood separates the
regular neighborhood from the rest of $\QQ$ as an $I$-bundle. This completes
the proof of the lemma.
\end{proof}

\subsection{Proof of the main theorem}

What follows is a proof of Theorem~\ref{T:VolumeGuts}. Recall that $\OO$ is a compact,
orientable, irreducible, turnover-reduced, $3$--orbifold with underlying
space $S^3$ whose interior admits a hyperbolic structure of finite volume.
Let $\SS$ be a closed, incompressible $2$--suborbifold of $\OO$ of the form 
$\SS = S^2(n_1,n_2,n_3,n_4)$.
 Our goal is a lower volume
bound on $\OO$ in terms of the topology of $\SS$. The plan of the proof is to
find all possibilities for $guts(\SS)$ by
applying Lemma~\ref{l:annuli} and then applying
Theorem~\ref{T:AST}.

\begin{proof}  To begin, we note that $\partial \OO = P \cup
H$, where $P$ is the set of rigid cusp neighborhood boundaries and $H$ the
set of hyperbolic turnovers (corresponding to Euclidean and hyperbolic
vertices in the singular set $\Sigma$ of $\OO$, respectively). The presence
of hyperbolic turnovers in the boundary prevents the interior of $\OO$ from
having a finite volume hyperbolic structure, but $\OO$ does admit a unique
hyperbolic structure of finite volume in which the turnovers are realized as
totally geodesic boundary components. 
Let $D(\OO) = \OO \cup_H \overline{\OO}$ be the double of $\OO$ along its totally geodesic turnover
boundary components where $\overline{\OO}$ denotes $\OO$ with reversed
orientation. The orbifold $\OO$ is naturally a $3$--suborbifold of $D(\OO)$.
Since an incompressible annulus in $\OO$ can be made to be disjoint from any
incompressible turnover in the boundary of $\OO$, $D(\OO)$ has a hyperbolic
structure of finite volume on its interior with possible cusps coming from
Euclidean vertices in the singular set of $\OO$.  Let $G = guts(D(\OO)\split
D(\SS), D(p))$. We then define $guts(\SS)$ to be the intersection of $G$ with
$\OO$. Note that we may apply Theorem~\ref{T:AST} directly to $guts(\SS)$ since
$\vol(G) = 2\vol(guts(\SS))$. 

Let $\QQ$ denote one of the two components of $\OO \split \SS$; it is a compact, orientable, irreducible, atoroidal, turnover-reduced
3-orbifold, with one incompressible boundary component $\SS$ (that is not a
hyperbolic turnover) and all other boundary components consisting of rigid
and hyperbolic turnovers. Note that $\QQ$ satisfies the assumptions of
Lemma~\ref{l:annuli}. We will abuse notation and call the collection of rigid
boundary components $P$. Recall the notation $\partial_{0} \QQ = \partial \QQ
- P$. Because hyperbolic turnovers are always incompressible, $\partial_{0}
\QQ$ is incompressible. Using the remarks after Definition \ref{d:pared}, we
will identify the characteristic subpair and, subsequently, the guts of
$(\QQ, P)$. 

If $(\QQ, \partial \QQ_{0})$ contains no essential
annuli (that are not boundary parallel), then it is acylindrical and admits a
hyperbolic metric with totally geodesic boundary. In particular, it is equal
to its guts, and in this case, Theorem \ref{T:AST} provides a lower bound of
\begin{equation}\label{E:Bound1}
	\Vol(\QQ) \geq -V_{8}\chi(guts(\SS)) = -\frac{V_{8}}{2} \chi(\partial \QQ) \geq -\frac{V_{8}}{2} \chi(\SS).
\end{equation}
The latter inequality takes into account that some components
of the boundary may be hyperbolic turnovers, which we discard in the estimate
because they may or may not be present. Otherwise, $\QQ$ admits one of the
configurations of essential annuli from the Lemma~\ref{l:annuli}. We examine the
cases.

In the case of Figure \ref{F:essann1}$(i)$, the essential annulus cuts off a
solid torus with an annulus in its boundary contained in $\partial \QQ$. The
remaining piece could be pared acylindrical, in which case the Euler
characteristic of its boundary, being the same as that of $\SS$, yields the
same lower volume bound as in (\ref{E:Bound1}). If not, then because it is
not a solid orbifold torus, it must be an ($I$-bundle, $\partial
I$-subbundle) pair as in item \ref{C:char}.(3). In particular, we obtain no
lower volume bound, but we do have a concrete description of the type of
$\QQ$ in this case (and the next). See Proposition \ref{L:eyebundles} below, after
the proof. The case of Figure \ref{F:essann1}$(i)$ when $e=2$ is exactly
analogous to the previous case, with the same possible volume bounds.

In the case of Figure \ref{F:essann1}$(ii)$, there are either one or two
essential annuli, each of which is a  disk with two singular points of order
2. If $n \geq 2$, then there are two annuli, and they divide $\QQ$ into three
components. If $n=1$, then there is only one annulus and it divides $\QQ$
into two components. In the first case, the ``middle'' component is a solid
torus and so contributes nothing to the guts, and the ``upper'' and ``lower''
components may or may not contribute to the guts, depending on whether they
are ($I$-bundle, $\partial I$-subbundle) pairs. If, for example, the upper
component in the figure is not such a bundle, then it provides the following
guts-based lower volume bound contribution, based on the fact that its
boundary is a 2-orbifold $X$ with underlying space the 2-sphere and with four
singular points of orders $a$, $b$, 2, and 2:
\begin{equation}\label{E:Bound2}
	\Vol(\QQ) \geq -\frac{V_{8}}{2} \chi(X) = \frac{V_{8}}{2} \left(1 - \frac{1}{a} - \frac{1}{b} \right).
\end{equation}
A similar computation holds for the lower component. In the case when both of
these components are $I$-bundles, we do not obtain a volume bound, but we do
have a characterization given in Proposition \ref{L:eyebundles} below, after the
proof.  

Combining these results, we see that the volume bounds we obtain from
(\ref{E:Bound1}) and (\ref{E:Bound2}) fall into six categories, corresponding
to the items in Theorem \ref{T:VolumeGuts}.1:
\begin{enumerate}
	\item Both sides of $\SS$ in $\OO$ contribute fully to the guts, so we
	  combine the two bounds of type (\ref{E:Bound1}), or
	\item one side of $\SS$ contributes fully to the guts, and the other side
	  contributes the guts only using two of the four singular points of
	  $\SS$, so we combine the bound of type (\ref{E:Bound1}) with the bound
	  of type (\ref{E:Bound2}), or 
	\item one side of $\SS$ contributes fully to the guts, and the other side
	  contributes nothing to the guts, so we use only one volume bound of
	  type (\ref{E:Bound1}), or
	\item each side of $\SS$ contributes to the guts using only two of the
	  four singular points of $\SS$, and so we combine two volume bounds of
	  type (\ref{E:Bound2}), or
	\item one side of $\SS$ contributes to the guts using only two of the
	  four singular points of $\SS$, and so we use only one volume bound of
	  type (\ref{E:Bound2}), or
	\item neither side of $\SS$ contributes to the guts, and we use Lemma
	  \ref{L:eyebundles} in Section \ref{s:emptyguts} to classify these
	  orbifolds.
\end{enumerate}
This completes the proof of Theorem \ref{T:VolumeGuts}.
\end{proof}

\section{Orbifold $I$--bundles and  hungry orbifolds}\label{s:hungry}

In the proof of the main theorem, the only cases in which a lower bound could
not be obtained involved the possibilities when the regions $R$, $R_{1}$, and
$R_{2}$ in Figure \ref{F:essann1} were ($I$-bundle, $\partial I$-subbundle)
pairs. The following proposition classifies these orbifolds.

\begin{proposition}\label{L:eyebundles}Let $M$ be an orientable 3-orbifold with underlying space a 3-ball such that $\partial M$ has underlying space a 2-sphere with 4 singular points, obtained from the orbifold $\QQ$ as above. If $M$ is an orbifold $I$-bundle, then $M$ has one of the forms given in Figure \ref{F:bundles}.
\end{proposition}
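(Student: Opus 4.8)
The plan is to invoke the structure theory of orbifold $I$-bundles to reduce the classification to an enumeration of the possible base $2$-orbifolds, and then to read off the singular graph of $M$ in each case. Recall that an orientable orbifold $I$-bundle over a $2$-orbifold base $F$ is, up to fiber-preserving homeomorphism, one of two types: a \emph{product} $F \times I$ when $F$ is orientable, in which case the horizontal boundary $\partial_h M$ is two copies of $F$; or a \emph{twisted} $I$-bundle, realized as the mapping cylinder of the orientation double cover $\widetilde F \to F$ when $F$ is nonorientable as an orbifold (this includes the case of a reflector boundary), in which case $\partial_h M = \widetilde F$. In both cases the vertical boundary is $\partial F \times I$, and the underlying space $|M|$ either fibers over $|F|$ (product case) or is the mapping cylinder of $|\widetilde F| \to |F|$ (twisted case).

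First I would use the hypothesis $|M| = D^3$ to pin down the underlying space of the base. Since an $I$-bundle deformation retracts onto $F$, and an $I$-bundle over a closed surface, an annulus, or a M\"obius band fails to have ball underlying space (for instance the annulus gives a solid torus and the twisted bundle over $\RP^2$ gives a punctured $\RP^3$), the only possibility consistent with $|M| = D^3$ is $|F| = D^2$. In the product case this yields $|M| = D^2 \times I = D^3$ directly; in the twisted case one checks that the mapping cylinder of the folding double cover $S^2 \to D^2$ of underlying spaces is again a $3$-ball.

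Next I would determine the orbifold structure on the disk base $F$ from the requirement that $\partial M$ be a $2$-sphere with exactly four cone points. In the product case $F$ is orientable, so the reflector boundary is disallowed and $F = D^2(n_1,n_2)$ with two interior cone points; then $\partial_h M = S^2(n_1,n_2,n_1,n_2)$, the vertical boundary $\partial D^2 \times I$ is nonsingular, and the singular set of $M$ is the pair of vertical arcs $\{p_1,p_2\} \times I$, i.e. two unknotted, unlinked strands labeled $n_1,n_2$ in $D^3$. In the twisted case $F$ is a disk whose boundary circle is a reflector, carrying some interior cone points and corner reflectors; passing to the orientation double cover doubles each interior cone point and preserves each corner reflector, so the condition that $\widetilde F = \partial_h M$ be a sphere with four cone points leaves only the base $F = D^2_*(n_1,n_2)$ and its corner-reflector analogues. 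For each such base the singular set of $M$ is obtained by tracking how the interior cone arcs and the reflector (order-two) locus sit in the folded total space, yielding the remaining configurations of Figure~\ref{F:bundles}.

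Finally, I would match these bundles against the boundary types actually produced by $\QQ$ in Lemma~\ref{l:annuli} --- namely $S^2(a,b,c,d)$ in the configuration of Figure~\ref{F:essann1}$(i)$ and $S^2(a,b,2,2)$ in that of Figure~\ref{F:essann1}$(ii)$ --- using the irreducibility, atoroidality, and turnover-reducedness inherited from $\QQ$ to discard any base orbifold incompatible with these labelings. The main obstacle I anticipate is the twisted case: verifying that the underlying space of the folded mapping cylinder is genuinely $D^3$, and, more delicately, recording the singular graph correctly --- in particular how the reflector circle of the base becomes an order-two edge of the orientable total space --- so that the enumeration of admissible base orbifolds is complete and each produces exactly one of the pictured forms.
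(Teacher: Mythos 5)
Your overall strategy---classifying $M$ by its base $2$--orbifold, with the product/twisted dichotomy according to orientability of the base---is legitimate, and it is essentially dual to the paper's argument, which parametrizes instead by the orientation double cover $F$ together with the involution $\varphi$, writing $M=(F\times[-1,1])/\bigl((x,y)\sim(\varphi(x),-y)\bigr)$. But your enumeration in the twisted case has a genuine gap: you assume the base is a disk \emph{whose entire boundary circle is a reflector}, so that $\partial M=\partial_h M=\widetilde F$ and the cone-point condition can be imposed on $\widetilde F$ alone. A nonorientable base need not be of this form: the boundary circle of the disk may consist of a mirror arc together with a genuine boundary arc (a half-disk with one silvered side, possibly carrying an interior cone point or corner reflectors on the mirror arc). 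Such bases are nonorientable as orbifolds, so they belong to your twisted case, but they have nonempty vertical boundary, and the four cone points of $\partial M$ then come partly from $\partial_v M$: each point where a mirror arc meets a boundary arc contributes an order-$2$ cone point of $\partial M$. The correct count is $2k+m+t=4$ on $\partial M$, where $k$ is the number of interior cone points, $m$ the number of corner reflectors, and $t$ (necessarily even) the number of mirror/boundary transition points. The case $k=1,m=0,t=2$ (half-disk with one interior cone point of order $a$) is Figure~\ref{F:bundles}$(iv)$, with boundary $S^2(a,a,2,2)$ and singular set two clasped arcs labeled $a$ and $2$; the case $k=0,m=2,t=2$ (half-disk whose mirror arc carries corner reflectors of orders $a$, $b$) is Figure~\ref{F:bundles}$(v)$, the ``H.'' Your enumeration produces only types $(i)$, $(ii)$, $(iii)$, $(vi)$, and the two missing types are not homeomorphic to any of these: every twisted bundle over a fully mirrored base contains the image of the mirror circle as a closed order-$2$ loop in its singular graph, whereas the singular graphs of $(iv)$ and $(v)$ have no cycle.

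This omission is not peripheral. The bundles $(iv)$ and $(v)$ are exactly the ones that occur as the regions $R_1$, $R_2$ of Figure~\ref{F:essann1}$(ii)$---their horizontal boundary is the disk in $\SS$ containing two of the cone points, and their vertical boundary is the essential $D^2(2,2)$ annulus---and they are required for the empty-guts classification in Proposition~\ref{P:hungry}. The paper's proof catches them in its case where $F$ has genus zero and nonempty boundary and $\varphi$ is a reflection (its two subcases, cone points exchanged versus fixed, give $(iv)$ and $(v)$ respectively). In your parametrization the fix is to impose the condition on $\partial M$ rather than $\partial_h M$ and to allow mixed mirror/boundary circles, running the count above; you will then also meet the leftover case $t=4$, $k=m=0$, whose total space has boundary $S^2(2,2,2,2)$, which is excluded in context because $\SS$ would then be an orbifold torus, contradicting the incompressibility of $\SS$ and atoroidality of $\QQ$ (the paper's proof dismisses this same case only implicitly). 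With that correction your argument does recover the paper's list.
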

\begin{proof} For the purposes of this lemma, an orbifold $I$-bundle is a
  space of the form $(F \times [-1,1])/((x,y) \sim (\varphi(x),-y))$, where
  $F$ is a connected, orientable 2-orbifold and $\varphi$ is an involution of
  $F$ \cite[page 443]{Bonahon-Siebenmann}. Of course, one possibility is that
  $F$ has two singular points and $\varphi$ is the identity, in which case
  $M$ is just a product of $F$ with an interval, as in Figure
  \ref{F:bundles}$(vi)$. If the underlying space of $F$ has genus greater
  than zero, then such a quotient as above will not have underlying space the
  3-ball, because a component of its boundary will not have the appropriate
  underlying space. If the underlying space of $F$ is the 2-sphere, then
  there are four choices (up to isotopy) for the involution $\varphi$: the
  identity, a reflection across a great circle, a rotation of order two, or the antipodal map. 
\begin{figure}
	\labellist
	\small\hair 2pt
	\pinlabel {$2$} [ ] at 130 265
	\pinlabel {$a$} [ ] at 40 325
	\pinlabel {$b$} [ ] at 112 325
	\pinlabel {$a$} [ ] at 213 325
	\pinlabel {$a$} [ ] at 293 325
	\pinlabel {$2$} [ ] at 281 261
	\pinlabel {$2$} [ ] at 251 236
	\pinlabel {$b$} [ ] at 213 211
	\pinlabel {$c$} [ ] at 293 211
	\pinlabel {$a$} [ ] at 397 312
	\pinlabel {$b$} [ ] at 475 312
	\pinlabel {$2$} [ ] at 429 290
	\pinlabel {$2$} [ ] at 409 270
	\pinlabel {$2$} [ ] at 451 270
	\pinlabel {$2$} [ ] at 429 260
	\pinlabel {$d$} [ ] at 395 225
	\pinlabel {$c$} [ ] at 462 225
	\pinlabel {$a$} [ ] at 50 135
	\pinlabel {$a$} [ ] at 103 135
	\pinlabel {$2$} [ ] at 50 21
	\pinlabel {$2$} [ ] at 102 21
	\pinlabel {$a$} [ ] at 228 138
	\pinlabel {$b$} [ ] at 278 138
	\pinlabel {$2$} [ ] at 208 74
	\pinlabel {$2$} [ ] at 252 97
	\pinlabel {$2$} [ ] at 297 74
	\pinlabel {$a$} [ ] at 406 138
	\pinlabel {$b$} [ ] at 466 138
	\pinlabel {$a$} [ ] at 406 18
	\pinlabel {$b$} [ ] at 466 18
	\pinlabel {$(i)$} [ ] at 75 182
	\pinlabel {$(ii)$} [ ] at 252 182
	\pinlabel {$(iii)$} [ ] at 431 182
	\pinlabel {$(iv)$} [ ] at 75 -8
	\pinlabel {$(v)$} [ ] at 253 -8
	\pinlabel {$(vi)$} [ ] at 430 -8
  \endlabellist
  \centering
  \includegraphics[scale=0.75]{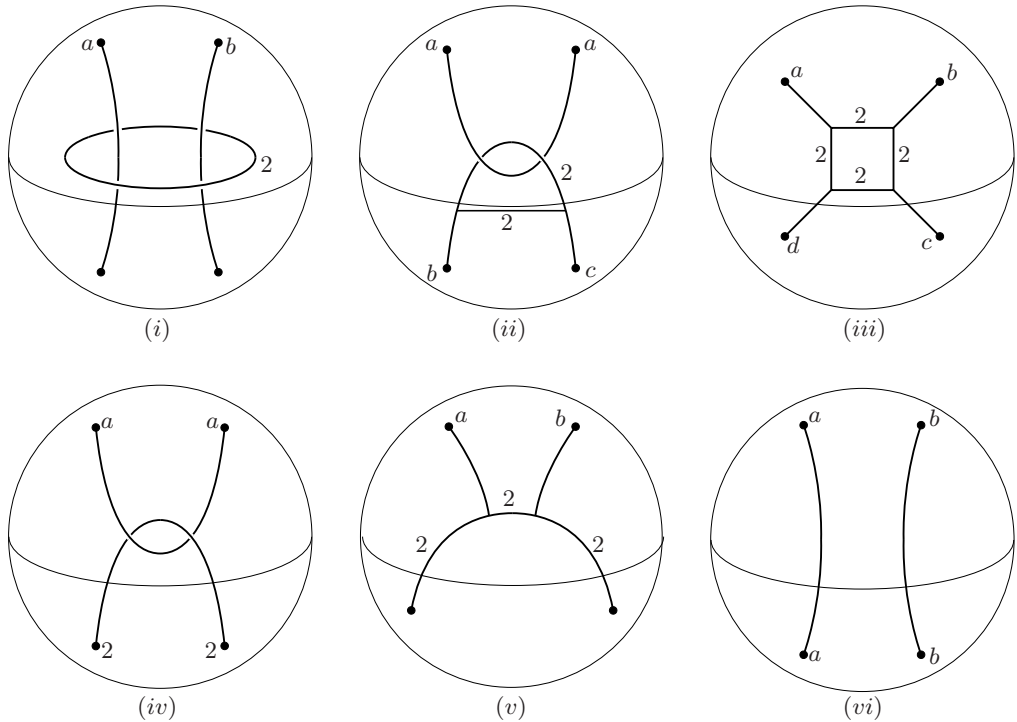}
  \caption{The different possibilities for $I$-bundles.}
  \label{F:bundles}
\end{figure}

\begin{figure}
\labellist
\small\hair 2pt
\pinlabel {$a$} [ ] at 10 152
\pinlabel {$a$} [ ] at 77 152
\pinlabel {$180^\circ$} [ ] at 40 56
\pinlabel {$F\times\{1\}$} [ ] at -20 151
\pinlabel {$F\times\{0\}$} [ ] at -20 82
\pinlabel {$F\times\{-1\}$} [ ] at -24 14
\pinlabel {$2$} [ ] at 180 119
\pinlabel {$a$} [ ] at 199 127
\pinlabel {$2$} [ ] at 218 119
\pinlabel {$\longrightarrow$} [ ] at 126 83
\endlabellist
\centering
\includegraphics[scale=1.0]{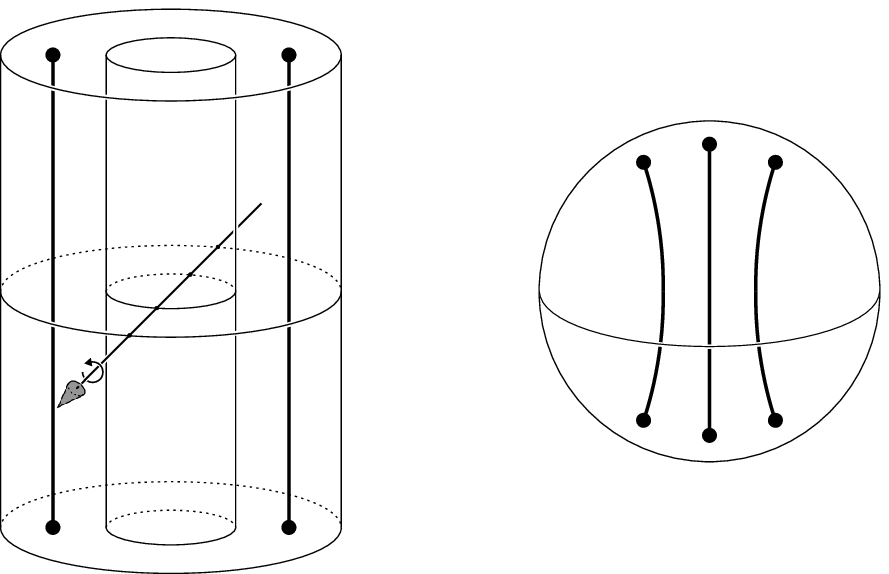}
\caption{Symmetry of a surface crossed with an interval, yielding an $I$-bundle.}
\label{F:toomanypoints}
\end{figure}
Without even needing to consider the singular locus, we note that the
identity map, the antipodal map, and a rotation of order two will each yield
a quotient space either whose underlying space is not the 3-ball or that
contains a 2-orbifold that  cannot occur in $\QQ$ (by the construction of
$\QQ$, as coming from the 3-sphere with an embedded trivalent graph).
However, if $F$ has underlying space the 2-sphere and if $\varphi$ is a
reflection across a great circle, then the quotient has underlying space a
3-ball and contains one unknotted loop labeled 2 in its interior. In order to
obtain a quotient whose boundary contains four singular points, we have three
options: (1) If $F$ has four singular points of orders $a$, $a$, $b$, and
$b$, and $\varphi$ is a reflection that interchanges the singular points of
corresponding orders, then the resulting quotient is depicted in Figure
\ref{F:bundles}$(i)$; (2) If $F$ has four singular points of orders
$a$, $a$, $b$, and $c$, and $\varphi$ fixes only the singular points labeled
$b$ and $c$ and interchanges the singular points labeled $a$, then the result
is depicted in Figure \ref{F:bundles}$(ii)$; (3) If $F$ has four singular
points of orders $a$, $b$, $c$, and $d$ and $\varphi$ fixes them all, then
the result is Figure \ref{F:bundles}$(iii)$. 

If $F$ has genus zero and multiple boundary components, then an involution that interchanges two boundary components will create higher genus in the boundary of the quotient, so $\varphi$ must leave the boundary circles of $F$ invariant. There are only three possibilities for how an involution of $F$ can act on a boundary circle: the identity, a reflection, or the antipodal map. If $F$ has more than one boundary circle, then the identity involution will create higher genus in the boundary of the quotient, and any action by the antipodal map will yield a quotient with a non-orientable crosscap in its boundary, contrary to our assumption on $M$. So we are left with a reflection, which extends from the boundary circles of $F$ to a reflection of the whole orbifold $F$.

If $\varphi$ is a reflection of a 2-orbifold $F$ with genus zero, no singular points, and  two  boundary components, then the $I$-bundle quotient obtained from $\varphi$ has four singular points of order two in its boundary. See Figure \ref{F:toomanypoints} (with $a=1$). If $F$ has more than two boundary components, or at least two boundary components and some singular points, then the resulting quotient will have more than four singular points in its boundary. See Figure \ref{F:toomanypoints} (with $a > 1$) in the case that $F$ has two boundary components and one singular point.
In the cases that we are considering, this is not possible. So $F$ must have a single boundary component. Because $\varphi$ is a reflection, we have only two possibilities: (1) If $F$ has two singular points of the same order that are exchanged, then the result is Figure \ref{F:bundles}$(iv)$; (2) If $F$ has two singular points labeled $a$ and $b$ that are fixed by $\varphi$, then the result is Figure \ref{F:bundles}$(iv)$. This completes the proof of the lemma. 
\end{proof}

\subsection{The case of hungry orbifolds}\label{s:emptyguts}

When the guts of $\OO\split\SS$ are empty, then $\OO$ can have only one of finitely
many forms of a certain type. We describe these types in this section here.

A \emph{rational tangle operation} on a 2-sphere with four marked points is
an isotopy of the 2-sphere that permutes the four marked points.

Let $\TT$ denote $S^2$ with four marked points. Let $\sigma: \TT\times [0,1]
\to \TT$ be an isotopy and let $\sigma_t(x) = \sigma(x,t)$ for $x \in \TT$
and $t \in [0,1]$. The \textit{isotopy cylinder} for $\sigma$ is the set
\[\TT_{\sigma} = \left\{ (\sigma_t(x) , t) \, \mid \, x \in \TT, t \in [0,1]
\right\}.\] Note that in $\TT_{\sigma}$, the marked points trace out a braid
in $\TT \times I$.

Suppose $\QQ_i$ is an orbifold with $\TT_i \subseteq \partial \QQ_i$ for $i
\in \{0,1\}$ where each $\TT_i$ is homeomorphic to $\TT$. If $\sigma$ is a
rational tangle operation, then we can glue $\QQ_0$ to $\QQ_1$ along the
isotopy cylinder for $\sigma$ to obtain $\QQ_0 \sqcup_{\sigma} \QQ_1$. More
precisely, 
\[\QQ_0 \sqcup_{\sigma} \QQ_1 = (\QQ_0 \amalg \QQ_1 \amalg \TT_\sigma)/\sim\]
where $\sim$ is an identification such that $x \sim (x,i)$ for each $x$ in
$\TT_i$ and $i\in \{0,1\}$.

\begin{proposition}\label{P:hungry}
Let $\OO$ be a $3$--orbifold containing an incompressible $2$--suborbifold
$\SS$ of the form $S^2(n_1,n_2,n_3,n_4)$ or $D^2_*(n_1,n_2)$. Suppose that
$guts(\SS)$ is empty. Then $\OO$ has one of the following forms:
\begin{enumerate}
\item If $\SS = S^2(n_1,n_2,n_3,n_4)$, then $\OO = \QQ_0 \sqcup_\sigma \QQ_1$
where each of $\QQ_i$, $i\in\{0,1\}$ is one of the orbifolds in
Figure~\ref{F:essann1} with each of $R$, $R_1$, and $R_2$ equal to one of the
bundles from Figure~\ref{F:bundles}, and $\sigma$ a rational tangle isotopy.

\item If $\SS = D^2_*(n_1,n_2)$, then $\OO = \QQ_0 \sqcup_\sigma \QQ_1$
where $\QQ_0$, is one of the orbifolds in
Figure~\ref{F:essann1} with each of $R$, $R_1$, and $R_2$ equal to one of the
bundles from Figure~\ref{F:bundles} and $\QQ_1$ is a regular neighborhood of
$\SS$ in $\OO$, and $\sigma$ a
rational tangle isotopy.
\end{enumerate}
\end{proposition}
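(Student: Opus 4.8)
The plan is to split $\OO$ along $\SS$, to use the hypothesis $guts(\SS)=\emptyset$ together with the case analysis in the proof of Theorem~\ref{T:VolumeGuts} and with Proposition~\ref{L:eyebundles} to pin down the resulting pieces, and then to reconstruct $\OO$ by recording the gluing of the pieces as a rational tangle operation. Consider first the case $\SS = S^2(n_1,n_2,n_3,n_4)$. Because $\OO$ has underlying space $S^3$, the sphere $\SS$ separates $\OO$ into two pieces $\QQ_0$ and $\QQ_1$, each with underlying space $D^3$ and each meeting $\Sigma$ in the four cone points of $\SS$; as observed in the proof of the main theorem, each $\QQ_i$ satisfies the hypotheses of Lemma~\ref{l:annuli}. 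Since $guts(\SS)=guts(\OO \split \SS,\ \partial\OO \split \partial\SS)$ is a disjoint union over the components of $\OO \split \SS$ by Definition~\ref{d:guts}, emptiness of $guts(\SS)$ forces each of $\QQ_0$ and $\QQ_1$ to contribute nothing to the guts.

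By the case analysis in the proof of Theorem~\ref{T:VolumeGuts}, a piece $\QQ_i$ contributes nothing to the guts exactly when it is one of the configurations of Figure~\ref{F:essann1} in which every region that is not already a solid orbifold torus---namely $R$ in case $(i)$, or $R_1$ and $R_2$ in case $(ii)$---is an $(I\text{-bundle},\partial I\text{-subbundle})$ pair rather than a pared acylindrical orbifold. Requiring this of both $\QQ_0$ and $\QQ_1$ is precisely outcome (6) in the proof of the main theorem. Applying Proposition~\ref{L:eyebundles} to each such region identifies it with one of the bundles of Figure~\ref{F:bundles}, so each $\QQ_i$ is exactly one of the orbifolds described in the statement.

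It remains to reconstruct $\OO$ from $\QQ_0$ and $\QQ_1$. The two pieces are reattached along their common boundary $\SS$ by an orientation-reversing homeomorphism $h$ that preserves the cone-point orders. I would factor $h$ as the canonical orientation-reversing identification of the two ends of a product collar $\TT \times I$, followed by an orientation-preserving mapping class of $\SS$; it then suffices to realize the latter by a motion of the four cone points. Since the orientation-preserving mapping class group of the four-marked sphere is generated by half-twists, each induced by an isotopy carrying a pair of cone points around one another, this mapping class is the time-one map $\sigma_1$ of a rational tangle operation $\sigma$, with the cone points tracing the associated braid. Inserting the isotopy cylinder $\TT_\sigma$ as the collar between $\QQ_0$ and $\QQ_1$ then realizes $h$ and yields $\OO = \QQ_0 \sqcup_\sigma \QQ_1$.

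For $\SS = D^2_*(n_1,n_2)$ the suborbifold is one-sided, so cutting $\OO$ along $\SS$ yields a single connected piece $\QQ_0 = \OO \split \SS$ whose new boundary is the orientation double cover $S^2(n_1,n_1,n_2,n_2)$ of $\SS$; writing $\QQ_1$ for a regular neighborhood of $\SS$, we have $\QQ_0 = \overline{\OO - \QQ_1}$. Since $guts(\SS)=guts(\QQ_0)$ and $\QQ_1$ is a twisted orbifold $I$-bundle over $\SS$ and hence contributes nothing, emptiness of the guts forces $\QQ_0$ to be one of the orbifolds of Figure~\ref{F:essann1} with all regions the $I$-bundles of Figure~\ref{F:bundles}, exactly as before, and reattaching $\QQ_1$ along $S^2(n_1,n_1,n_2,n_2)$ is recorded by a rational tangle operation. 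The main obstacle is this reconstruction step: one must verify that the reattaching homeomorphism is genuinely captured by a rational tangle operation---that the relevant orientation-preserving mapping class is realized by a motion of the cone points compatible with their orders $n_1,\dots,n_4$---and track orientations carefully, since $h$ is orientation-reversing whereas a rational tangle operation, being an isotopy from the identity, is orientation-preserving and supplies only the twist, with the orientation reversal coming from the product collar.
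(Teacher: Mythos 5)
Your proposal is correct and follows essentially the same route as the paper: split $\OO$ along $\SS$ (or along the boundary of a regular neighborhood of $\SS$ in the $D^2_*$ case), use emptiness of the guts together with Lemma~\ref{l:annuli}, the characteristic suborbifold theory, and Proposition~\ref{L:eyebundles} to identify each piece as a Figure~\ref{F:essann1} configuration whose regions are the $I$--bundles of Figure~\ref{F:bundles}, and then reassemble $\OO$ via a rational tangle operation. The only differences are expository: the paper rules out the annulus-free piece directly (an $I$--bundle of type $(iv)$, $(v)$, or $(vi)$ has compressible boundary, contradicting incompressibility of $\SS$) where you delegate this to the case analysis in the proof of Theorem~\ref{T:VolumeGuts}, and your mapping-class-group justification of the regluing step is more detailed than the paper's brief closing remark.
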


\begin{proof}
Suppose that $\SS = S^2(n_1,n_2,n_3,n_4)$. Since $\SS$ is orientable, $\OO
\setminus \SS$ has two components. Denote these components by $\QQ_0$ and
$\QQ_1$. If $\QQ_i$ contains an incompressible annulus, then by
Lemma~\ref{l:annuli}, $\QQ_i$ and its annulus are configured as in
Figure~\ref{F:essann1}. By the characteristic suborbifold theory, each of the
regions $R$, $R_1$, and $R_2$ in these cases must actually be an orbifold
$I$--bundle (see Figure~\ref{F:bundles}), for otherwise, the portion of
$\QQ_i$ on that side of the essential annulus would contradict the fact that
$guts(\SS)$ is empty.

If $\QQ_i$ contains no essential annuli, then since $guts(\SS)$ is empty, by
the characteristic suborbifold theory, $\QQ_i$ must be an $I$--bundle. By
Proposition~\ref{L:eyebundles}, $\QQ_i$ must be of one of the six types
described by Figure~\ref{F:bundles}. By inspection, observe that $I$--bundles
of types ($i$), ($ii$), and ($iii$) of Figure~\ref{F:bundles} contain an
essential orbifold annulus. Therefore $\QQ_i$ must be an $I$--bundle of type
($iv$), ($v$), or ($vi$). However, each of these types have compressible
boundaries, contradicting the assumption that $\SS$ is incompressible in
$\OO$.

Let $\QQ_i$, $i\in\{0,1\}$ be two orbifolds as described in (1) of the
statement of the proposition. If $\sigma$ is any rational tangle operation on
$\SS$, then since $\sigma$ is an isotopy, the two components of $\QQ_0
\sqcup_\sigma \QQ_1 \split \SS$  have the same orbifold type as $\QQ_0$ and
$\QQ_1$.

If $\SS = D_*^2(n_1,n_2)$, then let $\QQ_1$ be a closed regular neighborhood
of $\SS$ in $\OO$. Then the boundary of $\QQ_1$ has the form
$S^2(n_1,n_1,n_2,n_2)$. The same argument as in the orientable case can be
used to show that $\OO = \QQ_0 \sqcup_\sigma \QQ_1$ where $\QQ_0$ is as
described in the statement of the proposition and $\sigma$ is a rational
tangle operation on  $S^2(n_1,n_1,n_2,n_2)$.
\end{proof}

\bibliographystyle{hamsplain}
\bibliography{Gutsamoto}

\providecommand{\bysame}{\leavevmode\hbox to3em{\hrulefill}\thinspace}
\providecommand{\href}[2]{#2}
\begin{thebibliography}{10}

\bibitem{Agol2cusp}
Ian Agol, \emph{The minimal volume orientable hyperbolic 2-cusped 3-manifolds},
  Proc. Amer. Math. Soc. \textbf{138} (2010), no.~10, 3723--3732.

\bibitem{agol-storm-thurston}
Ian Agol, Peter~A. Storm, and William Thurston, \emph{Lower bounds on volumes
  of hyperbolic {H}aken 3-manifolds}, J. Amer. Math. Soc. \textbf{20} (2007),
  no.~4, 1053--1077, \mbox{{arXiv:math/0506338}}.

\bibitem{SmallestHakenPoly}
Christopher~K. Atkinson and Shawn Rafalski, \emph{The smallest {H}aken
  hyperbolic polyhedra}, Proc. Amer. Math. Soc. \textbf{141} (2013), no.~4,
  1393--1404.

\bibitem{BMP03-1}
Michel Boileau, Sylvain Maillot, and Joan Porti, \emph{Three-dimensional
  orbifolds and their geometric structures}, Panoramas et Synth\`eses
  [Panoramas and Syntheses], vol.~15, Soci\'et\'e Math\'ematique de France,
  Paris, 2003.

\bibitem{Boileau-Porti}
Michel Boileau and Joan~and Porti, \emph{Geometrization of 3-orbifolds of
  cyclic type}, Ast\'erisque (2001), no.~272, 208, Appendix A by Michael
  Heusener and Porti.

\bibitem{Bonahon-Siebenmann}
F.~Bonahon and L.~C. Siebenmann, \emph{The characteristic toric splitting of
  irreducible compact {$3$}-orbifolds}, Math. Ann. \textbf{278} (1987),
  no.~1-4, 441--479.

\bibitem{Canary-McCullough}
Richard~D. Canary and Darryl McCullough, \emph{Homotopy equivalences of
  3-manifolds and deformation theory of {K}leinian groups}, Mem. Amer. Math.
  Soc. \textbf{172} (2004), no.~812, xii+218.

\bibitem{CoopHodgKer00}
Daryl Cooper, Craig~D. Hodgson, and Steven~P. Kerckhoff,
  \emph{Three-dimensional orbifolds and cone-manifolds}, MSJ Memoirs, vol.~5,
  Mathematical Society of Japan, Tokyo, 2000, With a postface by Sadayoshi
  Kojima.

\bibitem{GabMeyMill1}
David Gabai, Robert Meyerhoff, and Peter Milley, \emph{Minimum volume cusped
  hyperbolic three-manifolds}, J. Amer. Math. Soc. \textbf{22} (2009), no.~4,
  1157--1215.

\bibitem{GabMeyMill2}
\bysame, \emph{Mom technology and volumes of hyperbolic 3-manifolds}, Comment.
  Math. Helv. \textbf{86} (2011), no.~1, 145--188.

\bibitem{GehringMartin}
Frederick~W. Gehring and Gaven~J. Martin, \emph{Minimal co-volume hyperbolic
  lattices. {I}. {T}he spherical points of a {K}leinian group}, Ann. of Math.
  (2) \textbf{170} (2009), no.~1, 123--161.

\bibitem{Jaco-Shalen}
William~H. Jaco and Peter~B. Shalen, \emph{Seifert fibered spaces in
  {$3$}-manifolds}, Mem. Amer. Math. Soc. \textbf{21} (1979), no.~220,
  viii+192.

\bibitem{Johansson}
K.~Johansson, \emph{Homotopy equivalences of 3-manifolds with boundary},
  Lecture Notes in Mathematics, vol. 761.

\bibitem{Kapovich}
Michael Kapovich, \emph{Hyperbolic manifolds and discrete groups}, Modern
  Birkh\"auser Classics, Birkh\"auser Boston, Inc., Boston, MA, 2009, Reprint
  of the 2001 edition.

\bibitem{MarshallMartin}
T.~H. Marshall and Gaven~J. Martin, \emph{Minimal co-volume hyperbolic
  lattices. {II}. {S}imple torsion in {K}leinian groups},  (2008), Preprint.

\bibitem{miyamoto}
Yosuke Miyamoto, \emph{Volumes of hyperbolic manifolds with geodesic boundary},
  Topology \textbf{33} (1994), no.~4, 613--629.

\bibitem{Morgan}
John~W. Morgan, \emph{On {T}hurston's uniformization theorem for
  three-dimensional manifolds}, The {S}mith conjecture ({N}ew {Y}ork, 1979),
  Pure Appl. Math., vol. 112, Academic Press, Orlando, FL, 1984, pp.~37--125.

\bibitem{Otal1}
Jean-Pierre Otal, \emph{Le th\'eor\`eme d'hyperbolisation pour les vari\'et\'es
  fibr\'ees de dimension 3}, Ast\'erisque (1996), no.~235, x+159.

\bibitem{Otal2}
\bysame, \emph{Thurston's hyperbolization of {H}aken manifolds}, Surveys in
  differential geometry, {V}ol. {III} ({C}ambridge, {MA}, 1996), Int. Press,
  Boston, MA, 1998, pp.~77--194.

\bibitem{rafalski-thesis}
Shawn Rafalski, \emph{Immersed turnovers in hyperbolic 3-orbifolds}, Groups
  Geom. Dyn. \textbf{4} (2010), no.~2, 333--376.

\end{thebibliography}

\end{document}